\newcommand{\Sn}{\operatorname{S}}
\newcommand{\SO}{\operatorname{SO}}
\newcommand{\Bl}{\operatorname{BL}}
\newcommand{\Lo}{\operatorname{Lo}}
\newcommand{\Bn}{\operatorname{B}}
\newcommand{\Fl}{\operatorname{Flag}}
\newcommand{\Qt}{\operatorname{Quat}}
\newcommand{\Spin}{\operatorname{Spin}}
\newcommand{\Na}{\operatorname{N}}
\newcommand{\Blc}{\operatorname{BLC}}
\newcommand{\inv}{\operatorname{inv}}
\newcommand{\block}{\operatorname{block}}
\newcommand{\diag}{\operatorname{Diag}}
\newcommand{\Cl}{\operatorname{Cl}}
\newcommand{\Hquat}{\operatorname{HQuat}}
\newcommand{\Tr}{\operatorname{Trace}}
\newcommand{\sig}{\operatorname{sign}}
\newcommand{\NL}{\operatorname{NL}}
\newcommand{\GL}{\operatorname{GL}}
\newcommand{\Up}{\operatorname{Up}}
\newcommand{\Bru}{\operatorname{Bru}}
\newcommand{\Pos}{\operatorname{Pos}}
\newcommand{\BLS}{\operatorname{BLS}}
\theoremstyle{plain}
\newtheorem{Theo}{Theorem}
\newtheorem{Conje}{Conjecture}
\newtheorem{Fact}{Fact}
\newtheorem{Prop}{Proposition}[section]
\newtheorem{Lem}{Lemma}[section]
\theoremstyle{definition}
\newtheorem{Def}[Prop]{Definition}
\newtheorem{Remark}[Prop]{Remark}
\newtheorem{Ex}{Example}[section]
\newcommand{\sbd}{\smallblackdiamond}
\newcommand{\sd}{\diamond}
\newcommand{\sbc}{\bullet}
\title{Homotopy Type of Intersections of Real Bruhat Cells in Dimension 6}
\author{
Em{\'\i}lia Alves\thanks{
Em{\'\i}lia Alves,
emiliaalves@id.uff.br,
Departamento de Matem\'atica Aplicada,
Instituto de Matem\'atica e Estat{\'\i}stica,
Universidade Federal Fluminense,
Rua Professor Marcos Waldemar de Freitas Reis s/n,
Niterói, RJ 24210-201, Brazil}
\and
Giovanna Leal\thanks{
Giovanna Leal, giovannaleal@puc-rio.br,
Departamento de Matem\'atica, PUC-Rio,
Rua Marqu\^es de S\~ao Vicente 225,
Rio de Janeiro, RJ 22451-900, Brazil}
}
\date{June 2026}
\begin{document}

\maketitle

\begin{abstract}
%In this work, we investigate the arbitrary intersections of real Bruhat cells. 
We study the homotopy type of the intersection of two real Bruhat cells. 
This homotopy type coincides with that of an explicit submanifold of the group of real lower triangular matrices with diagonal entries equal to 1.
For $(n+1)\times(n+1)$ matrices with $n\leq4$, these submanifolds are disjoint unions of contractible connected components. Our focus is on such intersections for $6\times6$ real matrices. For this, we study the connected components of Bruhat cells corresponding to permutations $\sigma\in \Sn_6$ with at most 12 inversions, using the structure of the associated dual CW complexes. New combinatorial and topological tools are developed to describe the structure of the spaces $\mathcal{\Bl}_\sigma$ for certain permutations.
As a consequence, we show that, among permutations with at most 12 inversions, all connected components are contractible except for $\sigma=[563412]$. For this permutation, we identify a new non-contractible connected component with the homotopy type of the circle.

\end{abstract}

\section{Introduction}

Let $\GL_{n+1}$ denote the group of invertible real $(n+1)\times (n+1)$ matrices and let $\Up_{n+1}\subset\GL_{n+1}$ be the subgroup of upper triangular matrices. For each permutation $\sigma\in\Sn_{n+1}$, let $P_{\sigma}$ denote the corresponding permutation matrix, defined by $(P_{\sigma})_{i,i^{\sigma}}=1$ and $0$ otherwise. The top permutation is denoted by $\eta$, and is given by $i^{\eta}=n+2-i$, for all $i\in[\![n+1]\!]=\{1,\dots,n+1\}$. 

Let $\Lo_{n+1}^{1}$ be the nilpotent group of real lower triangular matrices with diagonal entries equal to 1. 
The Bruhat decomposition induces a partition $\Lo_{n+1}^{1}$ into subsets $\Bl_{\sigma}$ for $\sigma\in\Sn_{n+1}$:
\begin{equation}\label{Blsigma}
\Bl_{\sigma}=\{L\in \Lo^{1}_{n+1}\, |\, \exists U_0,U_1 \in \Up_{n+1},\, L=U_0P_{\sigma}U_1 \}.   
\end{equation}
The set $\Bl_{\sigma}$ has the same homotopy type as the intersection of two appropriate open Bruhat cells in $\GL_{n+1}$, corresponding to different bases.

In \cite{alves2022onthehomotopy}, Alves and Saldanha introduced useful tools for studying the homotopy type of these intersections. 
They apply this tools to prove the following theorem: 

\begin{Fact}\label{imrn}
Consider $\sigma\in\Sn_{n+1}$ and $\Bl_{\sigma}\subset\Lo_{n+1}^{1}$.
\begin{enumerate}
    \item For $n\leq 4$, every connected component of every set $\Bl_{\sigma}$ is contractible.
    \item For $n=5$ and $\sigma=[563412]\in\Sn_{6}$, there exist connected components of $\Bl_{\sigma}$, which are homotopically equivalent to $\mathbb{S}^{1}$.
    \item For $n\geq 5$, there exist connected components of $\Bl_{\eta}$, which have even Euler characteristic.
\end{enumerate}
\end{Fact}

Our aim is to extend this construction to the case $n=5$. Specifically, we examine the connected components of the set $\Bl_{\sigma}$, for $\sigma\in\Sn_6$. 
The main result of this paper is the following:

\begin{Theo}\label{theo1}
        Consider $\sigma\in\Sn_{6}$ and $\Bl_{\sigma}\subset \Lo^{1}_{6}$.
    \begin{enumerate}
        \item For $\inv(\sigma)\leq11$, every component of every set $\Bl_{\sigma}$ is contractible;
        \item For $\inv(\sigma)=12$, except for $\sigma=[563412]$, every component of every set $\Bl_{\sigma}$ is contractible;
        \item For $\sigma=[563412]$, the set $\Bl_{z}$ has
        \begin{enumerate}[label=(\alph*)]
        \item 8 values of $z\in\acute{\sigma}\Qt_6$ where there are five contractible connected components: 4 thin and 1 thick;
        \item 32 values of $z\in\acute{\sigma}\Qt_6$ where there are a single contractible connected component;
        \item 4 values of $z\in\acute{\sigma}\Qt_6$ where there are two connected components homotopically equivalent to $\mathbb{S}^1$;
        \item 16 values of $z\in\acute{\sigma}\Qt_6$ where there are a single connected component homotopically equivalent to $\mathbb{S}^1$;
        \item 4 values of $z\in\acute{\sigma}\Qt_6$ where there are a single inconclusive connected component, with Euler characteristic equal to 1.        
        \end{enumerate} 
        
        \end{enumerate}
\end{Theo}

According to Theorem 2 in \cite{alves2022onthehomotopy}, for $\sigma\in\Sn_{n+1}$ there exist a finite CW complex $\Blc_{\sigma}$ homotopically equivalent to $\Bl_{\sigma}$.
In particular, the connected components of $\Blc_{\sigma}$ correspond precisely to those of $\Bl_{\sigma}$.

Therefore, to determine the homotopy type of $\Bl_{\sigma}$, for $\sigma\in\Sn_6$, we classify the permutations by their number of inversions and study the connected components of $\Blc_{\sigma}$.
In this case, the maximum number of inversions is 15. This paper covers the case up to 12 inversions.
Analyzing the components using our current method becomes increasingly challenging as the dimension of the ancestries grows. 

\begin{Remark}
 For permutations with more than 12 inversions, we already have partial results. For 13 and 14 inversions, the Euler characteristic is either 0 or 1. For 15 inversions, there exists a component with Euler characteristic 2, while the others are 0 or 1. Further details are provided in Section~\ref{131415}.
\end{Remark}

Over the past century, Bruhat cell decompositions have been important to mathematics, particularly in the study of Grassmannians and flag spaces, and have become standard tools in fields like topology, enumerative geometry, representation theory, and the study of locally convex curves. 
Despite their long-standing importance, the topological study of Bruhat cell intersections, whether in pairs or more complex collections, remains relatively underexplored. 
These intersections naturally arise in various mathematical areas, including singularity theory, Kazhdan-Lusztig theory, and matroid theory. However, detailed topological results on these intersections are still scarce (see \cite{shapiro1997combinatorics}).

One notable exception to this lack of topological insight is the problem of counting connected components in pairwise intersections of big Bruhat cells over the real numbers. 
Significant advances were made in this area during the late 1990s, with key contributions found in works such as \cite{shapiro1998skew}, \cite{gekhtman2003number} and \cite{shapiro1997connected}.

The intersection of two opposite big Bruhat cells in $\Fl_{n+1}$ is homeomorphic to $\Bl_{\eta}$, where $\eta\in\Sn_{n+1}$ is the top permutation. The number of connected components of $\Bl_{\eta}$ is $2,6,20,$ and $52$ for $n=1,2,3,4$, respectively. For $n\geq 5$, the number of connected components stabilizes and is given by $3.2^n$. 

The relative positions of two big Bruhat cells in $\Fl_{n+1}$ correspond bijectively to the elements of $\Sn_{n+1}$.
In particular, opposite big Bruhat cells are associated with the top permutation $\eta\in\Sn_{n+1}$. 
The study of the number of connected components in the intersection of two big cells for a given relative position $\sigma$ was initiated in \cite{shapiro1998skew}. 
For any specific $\sigma\in\Sn_{n+1}$, the number of connected components can be determined based on the results from \cite{seven2005orbits}. 
However, to the best of our knowledge, no closed formula has been found.

Section \ref{reviwe} introduces pivotal concepts relevant to this work, including the wiring diagram, which will be widely used throughout the paper, and provides a concise overview of the key structures and ideas underlying this research. These include matrix groups such as $\Qt_{n+1}$, $\Spin_{n+1}$, and $\tilde{\Bn}_{n+1}^{+}$; the Clifford algebra $\Cl_{n+1}^{0}$; the notions of preancestry and ancestry; Bruhat cells, their properties and stratification; and the associated CW complexes. A solid understanding of these concepts, particularly in the context of the wiring diagram, is essential for what follows.

In Section \ref{wdd}, we develop a collection of new lemmas that play a crucial role in our analysis. These results provide the combinatorial and topological tools necessary to describe the structure of the spaces $\Bl_{\sigma}$ and to advance the study of CW complexes associated with permutations.

Sections \ref{inv6} and \ref{inv12} examine the connected components of $\Bl_{\sigma}$ for $\sigma \in \Sn_6$ with $\inv(\sigma) \leq 12$, excluding the permutation $\sigma = [563412]$. For all such permutations, we show that every connected component of $\Bl_{\sigma}$ is contractible. 

Section 6 focuses on the permutation $\sigma = [563412]$. This permutation has one component homotopically equivalent to $\mathbb{S}^1$ composed of 2-dimensional cells, as presented in \cite{alves2022onthehomotopy}. In addition, there are five contractible connected components and one component with Euler characteristic equal to~1, whose homotopy type remains inconclusive. Finally, we construct a new component, also homotopically equivalent to $\mathbb{S}^1$, but composed of 3-dimensional cells.

To conclude, Section \ref{131415} discusses partial results concerning the remaining 20 permutations in $\Sn_6$.

\vspace{1.0cm}

\textit{Acknowledgements.} This paper is based on the Ph.D. thesis of Giovanna Leal, who was advised by Nicolau C. Saldanha and co-advised by Emília Alves. The authors are grateful to Nicolau C. Saldanha for his guidance, encouragement, and many valuable suggestions throughout the development of this work. We also thank the members of the Ph.D. committee for valuable comments and suggestions, with special thanks to Michael Shapiro. The authors acknowledge the financial support of CAPES and PUC-Rio.

\section{Review}\label{reviwe}

We recall the notation and main results from~\cite{alves2022onthehomotopy} that will be used throughout the paper.

\subsection{Permutation}

This section starts with a review of the various forms used to represent a permutation $\sigma\in\Sn_{n+1}$. A common one is a list of values that is called the \textit{complete notation} $\sigma=~ [1^{\sigma}2^{\sigma}\ldots (n+1)^{\sigma}] \in \Sn_{n+1}$. 
Another way is by using Coxeter-Weyl generators $a_i=(i,i+1)$, with $i\in[\![n+1]\!]=\{1,\dots,n+1\}$, where a inversion is a pair $(i,j)\in[\![n+1]\!]^2$ with $i<j$ and $i^{\sigma}>j^{\sigma}$. 
Using this notation, a permutation can be written as a product of these transpositions. 
We denote by $\inv(\sigma)$ the number of inversions of $\sigma$.
A \textit{reduced word} for a permutation $\sigma \in \Sn_{n+1}$ is an expression of $\sigma$ as a product of generators $a_{i}=(i,i+1)$, where the number of generators is the number of inversions of sigma. For instance, $\sigma = [4321] = a_1a_2a_1a_3a_2a_1=a_2a_1a_2a_3a_2a_1$. 
Usually, there is more than one reduced word for a given permutation $\sigma$, but we shall keep our word fixed in the construction of the stratification of $\Bl_{\sigma}$.

A reduced word can be represented by a \textit{wiring diagram}, Figure \ref{wiring} illustrate an example. Each crossing in the diagram corresponds to a generator $a_{i}$, we read the diagram from left to right and from top to bottom.

\begin{figure}[H]
    \centering
    \includegraphics[scale=0.11]{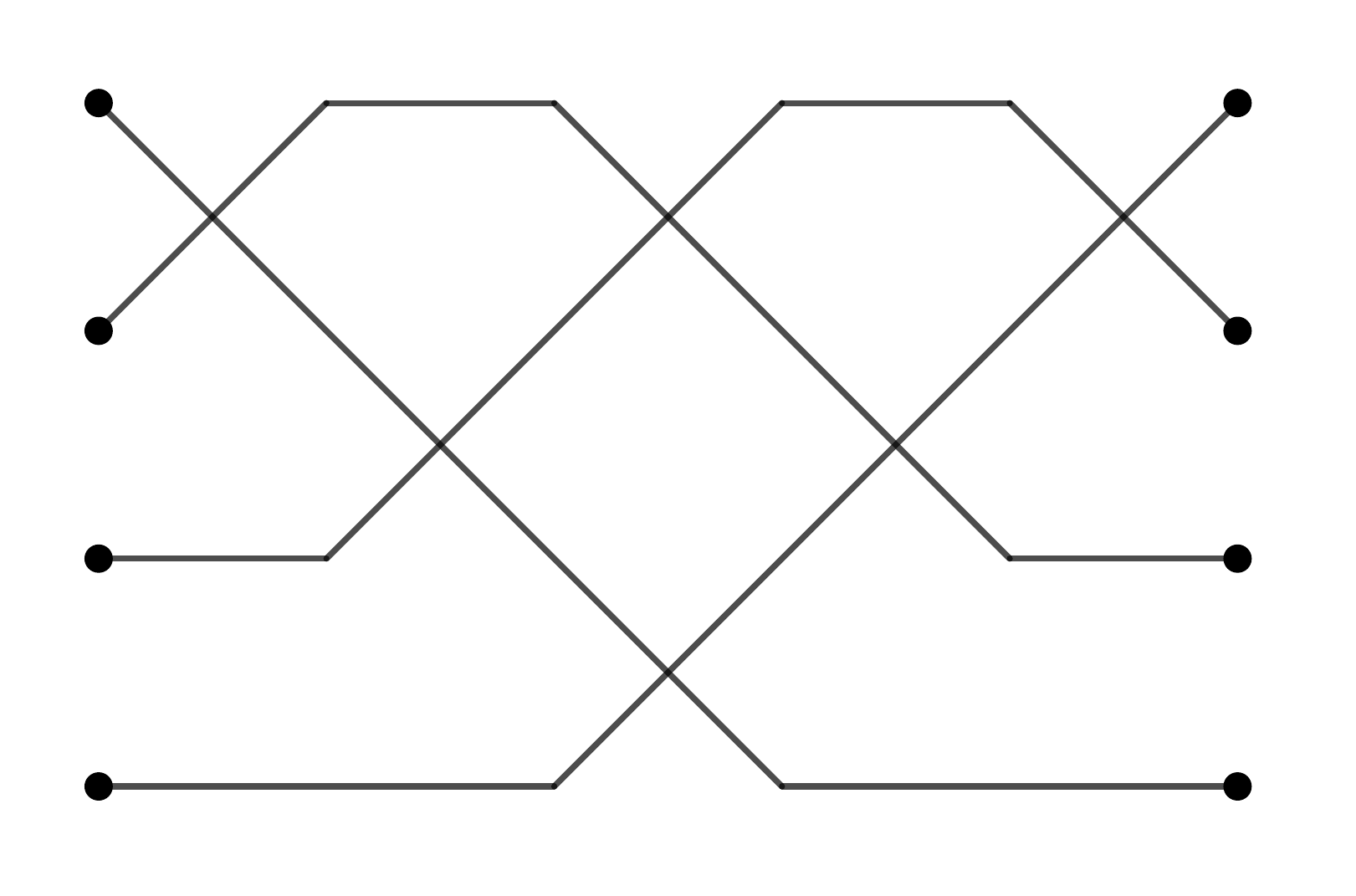}
    \caption{Wiring diagram of $\sigma=a_{1}a_{2}a_{1}a_{3}a_{2}a_{1}\in\Sn_4$.}
    \label{wiring}
\end{figure} 

Note that the inversion $a_i=(i,i+1)$  appears in the wiring diagram at height $i+\frac{1}{2}$.
The horizontal row between the starting points of two adjacent wires at height $i+\frac{1}{2}$ is called $r_i$.

Throughout this paper, we adopt the notations and constructions introduced in \cite{alves2022onthehomotopy}, in particular the notions of ancestry and the sets $\BLS_{\varepsilon}$, which we mention here before their precise definitions.
An \textit{ancestry} is a wiring diagram with black and white squares and disks marking the crossings. The ancestries must satisfy several conditions, which we will discuss in detail in the next section. The number of black or white squares are equal, called \textit{dimension} of the ancestry, $d=\dim(\varepsilon)=\#\diamond=\#\blackdiamond$. See Figure \ref{ancestries} for examples of ancestries.

\begin{figure}[H]
    \centering
    \includegraphics[scale=0.5]{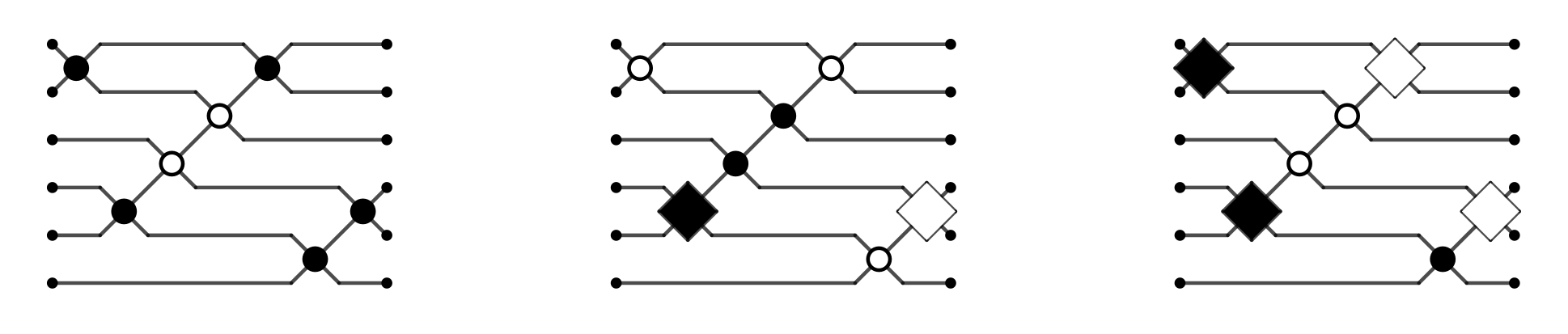}
    \caption{Examples of ancestries for $\sigma=a_{1}a_{4}a_{3}a_{2}a_{1}a_{5}a_{4}\in\Sn_3$ with dimension 0, 1 and 2, respectively.}
    \label{ancestries}
\end{figure} 

Following \cite{alves2022onthehomotopy}, for each ancestry $\varepsilon$ we define $\BLS_{\varepsilon}$ the set of matrices with ancestry $\varepsilon$. Each $\BLS_{\varepsilon}$ is a smooth submanifold of codimension $d=\dim(\varepsilon)$, such that 
\[
\Bl_{\sigma} = \bigsqcup_{\varepsilon} \BLS_{\varepsilon}.
\] 
Furthermore, $\BLS_{\varepsilon}$ is diffeomorphic to $\mathbb{R}^{\inv(\sigma)-d}$.

\subsection{Preancestry and ancestry}

In order to construct an ancestry, we first introduce an auxiliary structure, called a preancestry. Given a permutation $\sigma\in\Sn_{n+1}$ and a fixed reduced word $\sigma=a_{i_1}\cdots a_{i_\ell}$, with $\ell=\inv(\sigma)$. 
A \textit{preancestry} for a reduced word is a sequence $(\rho_k)_{0\leq k\leq \ell}$ of permutations with the following properties:
\begin{enumerate}
    \item $\rho_0=\rho_\ell=\eta$;
    \item for all $k\in[\![n]\!]$, either $\rho_k=\rho_{k-1}$ or $\rho_k=\rho_{k-1}a_{i_k}$;
    \item for all $k\in[\![n]\!]$, if $\rho_{k-1}a_{i_k}>\rho_{k-1}$ then $\rho_k=\rho_{k-1}a_{i_k}$.
\end{enumerate}

We emphasize that throughout this paper, inequalities between permutations refer to the (strong) Bruhat order in $\Sn_{n+1}$ (see \cite{bjorner2005combinatorics} and \cite{goulart2021locally}).

A preancestry $(\rho_k)$ is often more conveniently described by the sequence $\varepsilon_0:[\![n]\!]\to\{-2,0,2\}$ given by:
\[
\varepsilon_0(k)=
\begin{cases}
-2, \quad \rho_k=\rho_{k-1}a_{i_k}<\rho_{k-1},\\
0,\quad \rho_k=\rho_{k-1},\\
2, \quad \rho_k=\rho_{k-1}a_{i_k}>\rho_{k-1}.
\end{cases}
\]

In the wiring diagram $\varepsilon_0(k)=-2$ corresponds to a black square, while $\varepsilon_0(k)=+2$ corresponds to a white square. 
Figure \ref{inv7sigma41-pre} shows two preancestries for the permutation $\sigma=a_1a_2a_1\in\Sn_3$. The sequence 
\[
(\rho_0=\eta,\rho_1=\rho_0,\rho_2=\rho_1,\rho_3=\rho_2)=(\eta,\eta,\eta,\eta)
\]
corresponds to the preancestry $\varepsilon_{0_1}=(0,0,0)$, and 
\[
(\rho_0=\eta,
    \rho_1=\eta a_1,
    \rho_2=\rho_1,
    \rho_3=\rho_2 a_1)=(\eta,a_1a_2,a_1a_2,\eta)
\]
    corresponds to $\varepsilon_{0_2}=(-2,0,2)$.
    \begin{figure}[H]
    \centering
    \includegraphics[scale=1.0]{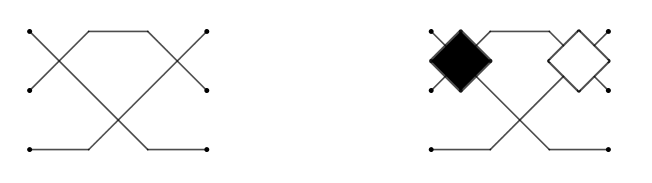}
    \caption{Preancestries $\varepsilon_{0_1}=(0,0,0)$ and $\varepsilon_{0_2}=(-2,0,+2)$.}
    \label{inv7sigma41-pre}
\end{figure}

To define the concept of an ancestry for a permutation, we need to introduce some important matrix groups.

The finite matrix group generated by the elements $\hat{a}_i$ satisfying the conditions: 
\[
\hat{a}_{i}^{2}=-1;\quad \hat{a}_{i}\hat{a}_{j}=\hat{a}_{j}\hat{a}_{i} \text{ if } |i-j|\neq1;\quad \hat{a}_{i}\hat{a}_{j}=-\hat{a}_{j}\hat{a}_{i} \text{ if } |i-j|=1,
\]
is called $\Qt_{n+1}$ and can be written as $\Qt_{n+1}=\Hquat_{n+1}\sqcup(-\Hquat_{n+1})$, where $\Hquat_{n+1}$ consists of elements that appear with a positive sign in $\Qt_{n+1}$. 

The subalgebra of the even elements of $\Cl_{n+1}$ is called Clifford Algebra $\Cl^{0}_{n+1}\subset\Cl_{n+1}$, which is a vector space of dimension $2^{n}$, with the orthonormal basis
\[
\Hquat_{n+1}=\{1,\hat{a}_{1},\hat{a}_{2},\hat{a}_{1}\hat{a}_{2},\hat{a}_{3},\hat{a}_{1}\hat{a}_{3},\hat{a}_{2}\hat{a}_{3},\hat{a}_{1}\hat{a}_{2}\hat{a}_{3},\dots,\hat{a}_{1}\cdots\hat{a}_{n}\}.
\]

It is well known that the spin group, denoted by $\Spin_{n+1}$, is the double universal cover of $\SO_{n+1}$.
Given the generators $\hat{a}_{i}\in\Qt_{n+1}$, we can define the one-parameter subgroups:
\[
\alpha_{i}^{\Spin}: \mathbb{R}\to\Spin_{n+1}
\]
\[
\alpha_{i}^{\Spin}(\theta)\mapsto\exp\left(\theta\frac{\hat{a}_{i}}{2}\right)=\cos\left(\frac{\theta}{2}\right)+\hat{a}_{i}\sin\left(\frac{\theta}{2}\right).\]

The group $\Spin_{n+1}$ is generated by $\alpha_{i}^{\Spin}(\theta)$.
Since $\alpha_{i}^{\Spin}(\pi)=\hat{a}_{i}$, it follows that $\Qt_{n+1}\subset \Spin_{n+1}$. Moreover, given that $\Cl_{n+1}^{0}$ is generated by $\Hquat_{n+1}$, we have $\Qt_{n+1}\subset\Spin_{n+1}\subset\Cl^{0}_{n+1}$.

We define $\tilde{\Bn}^{+}_{n+1}\subset\Spin_{n+1}$ as the subgroup generated by the elements $\{\acute{a}_{1},\dots,\acute{a}_{n}\}$. These generators are given by
\[
\acute{a}_{i}=\alpha_{i}^{\Spin}\left(\frac{\pi}{2}\right)=\frac{1+\hat{a}_{i}}{\sqrt{2}},\quad
\grave{a}_{i}=\alpha_{i}^{\Spin}\left(-\frac{\pi}{2}\right)=\frac{1-\hat{a}_{i}}{\sqrt{2}}.
\]
Given a reduced word for a permutation $\sigma=a_{i_{1}}\cdots a_{i_{\ell}}$, we set $\acute{\sigma}=\acute{a}_{i_{1}}\cdots\acute{a}_{i_{\ell}}$ and $\grave{\sigma}=\grave{a}_{i_{1}}\cdots\grave{a}_{i_{\ell}}$.

There exists a unique group homomorphism $\Pi:\Spin_{n+1}\to\SO_{n+1}$, such that $\alpha^{\Spin}_{i}(\theta)\mapsto \alpha^{\SO}_{i}(\theta)$. In other words, the map is defined by:
\[
\alpha_{i}^{\SO}(\theta) \mapsto
\begin{pmatrix}
I_{1} & & & \\
 & \cos(\theta) & -\sin(\theta) & \\
 & \sin(\theta) & \cos(\theta) &  \\
 & & & I_{2} 
\end{pmatrix},
\]
where $I_{1}\in\mathbb{R}^{(i-1)\times (i-1)}$ and $I_{2}\in\mathbb{R}^{(n-i)\times (n-i)}$ are identity matrices.
We often omit the superscripts when they are clear from the context.

Let $\diag_{n+1}$ denote the group of diagonal matrices with entries in $\{\pm1\}$.
From the map $\Pi:\Spin_{n+1}\to\SO_{n+1}$, it follows that $\Pi[\Qt_{n+1}]=\diag_{n+1}^{+}$ and $\Pi[\tilde{\Bn}_{n+1}^{+}]=\Bn^{+}_{n+1}$, where $\diag_{n+1}\cap\SO_{n+1}=\diag_{n+1}^{+}\subset\Bn_{n+1}$ denotes the normal subgroup of diagonal matrices, and $\Bn^{+}_{n+1}=\Bn_{n+1}\cap\SO_{n+1}$, with $\Bn_{n+1}$ being the group of signed permutation matrices.
The map $\Pi: \Spin_{n+1} \to \SO_{n+1}$ provides the following exact sequences:
\[
1 \to \Qt_{n+1} \to \tilde{\Bn}^{+}_{n+1} \to S_{n+1} \to 1,\quad 1 \to \{\pm1\} \to \Qt_{n+1} \to \diag_{n+1}^{+} \to 1.
\]

Having established these structures, we can now introduce ancestries that provide a combinatorial perspective on the problem. 
The wiring diagram representation of an ancestry was introduced in the previous section; we now present its sequential representation, as given in \cite{alves2022onthehomotopy}.

An \textit{ancestry} for a permutation is a sequence $(\varrho)_{0\leq k \leq \ell}$ of elements of $\tilde{\Bn}^{+}_{n+1}$ such that:
    \begin{enumerate}
        \item $\varrho_0=\acute{\eta},\quad \varrho_\ell\in\acute{\eta}\Qt_{n+1}$;
        \item for all $k$, we have $\varrho_k=\varrho_{k-1}$ or $\varrho_k=\varrho_{k-1}\acute{a}_{i_k}$ or $\varrho_k=\varrho_{k-1}\hat{a}_{i_k}$;
        \item the sequence $(\rho_k)$ defined by $\rho_k=\Pi_{\tilde{\Bn}^{+}_{n+1},\Sn_{n+1}}(\varrho_k)$ is a preancestry.
    \end{enumerate}
The final condition can be restated as follows: if $\Pi(\varrho_{k-1}) < \Pi(\varrho_{k-1})a_{i_k}$, it implies that $\varrho_k = \varrho_{k-1}\acute{a}_{i_k}$, for all $k$.

There are three additional sequences that represent an ancestry. 
Two consists of integers and the other comprises elements of $\Qt_{n+1}$.

The first sequence is $\xi:[\![l]\!]\to\{0,1,2\}$, defined recursively by $\varrho_k=\varrho_{k-1}(\acute{a}_{i_k})^{\xi(k)}$.
It follows that
$\varrho_k=(\acute{a}_{i_1})^{\xi(1)}\cdots(\acute{a}_{i_k})^{\xi(k)}$.
The second sequence $\acute{\rho}_kq_k=\varrho_k$ is determined by the relation $(q_k)_{0\leq k \leq \ell}$, with $q_k\in\Qt_{n+1}$, in particular, $q_\ell=\grave{\eta}\varrho_\ell$.

The third and most used one is $\varepsilon:[\![l]\!]\to \{\pm1,\pm2\}$ given by:
\[
\varepsilon(k)=
\begin{cases}
    -2, \qquad\qquad\qquad\quad\quad \xi(k)=1,\quad \rho_k<\rho_{k-1},\\
    +2, \qquad\qquad\qquad\quad\quad \xi(k)=1,\quad \rho_k>\rho_{k-1},\\
    (1-\xi(k))[\hat{a}_{i_k},q_{k-1}],\hspace{0.5em} \xi(k)\neq1.
\end{cases}
\]

Recall that, in the wiring diagram of $\sigma$, an ancestry $\varepsilon$ is represented using black and white squares and disks. As in the case of preancestries, black and white squares correspond to $-2$ and $+2$, respectively. For $-1$ and $+1$, we use black and white disks, respectively.
The number of black or white squares are equal, called dimension $d=\dim(\varepsilon)=\#\diamond=\#\blackdiamond$. Figure \ref{wiring ancestries} shows examples of an ancestry of dimensions 0 and 1.

\begin{figure}[H]
    \centering
    \includegraphics[scale=0.1]{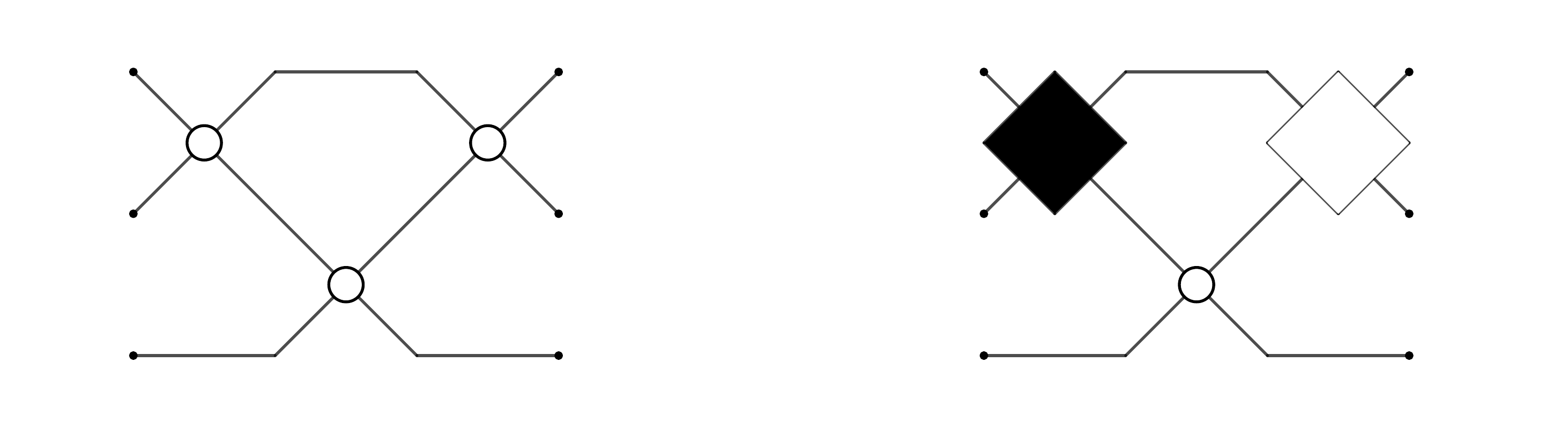}
    \caption{Ancestries $\varepsilon_1=(\circ\circ\circ)$ and $\varepsilon_2=(\sbd\circ\sd)$.}
    \label{wiring ancestries}
\end{figure} 

A region is a bounded connected component of the complement of the wires in a wiring diagram.
It is determined by the vertices $k_1,k_2$ in row $i_{k_1}$, together with all $k$ such that $k_1<k<k_2$ and $|i_k-i_{k_1}|=1$.
If $k_1$ and $k_2$ have opposite signs, flipping the signs along the boundary of this region defines a \textit{click}.

Figure \ref{click} illustrates a click applied to the upper region at the wiring diagram of $\sigma=a_2a_1a_4a_3a_2a_5a_4\in\Sn_6$. 
The left diagram has an ancestry $\varepsilon_1=(\sbc\sbc\sbc\sbc\circ\circ\sbc)$,
while the diagram on the right corresponds to $\varepsilon_2=~(\circ\circ\sbc\circ\sbc\circ\sbc)$.
\begin{figure}[H]
    \centering
    \includegraphics[scale=1.0]{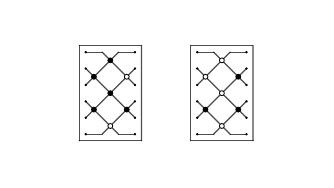}   
    \caption{Example of a click.}
    \label{click}
\end{figure}

An ancestry of dimension 0 is called \textit{thin} if all vertices in the same row have the same sign, and \textit{thick} otherwise. 

\subsection{Counting ancestries}

The group $\diag_{n+1}$, acts on $\SO_{n+1}$ by conjugation. The quotient $\mathcal{E}_n=$\( \displaystyle \frac{\diag_{n+1}}{\pm I} \), where $I$ is the identity matrix, is canonically isomorphic to $\{\pm1\}^{[\![n]\!]}$. A matrix $D\in\diag_{n+1}$ corresponds to $E\in\mathcal{E}_n$ with entries $E_{i}=D_{i,i}D_{i+1,i+1}$.
The group $\mathcal{E}_n$ acts by automorphisms on $\SO_{n+1}$, this action lifts to $\Spin_{n+1}$ and extends to $\Cl^{0}_{n+1}$ with $(\alpha_i(\theta))^{E}=\alpha_i(E_i\theta)$, and $(\hat{a}_i)^{E}=E_i\hat{a}_i.$

For $z\in\Cl^{0}_{n+1}$, define the real part by $\mathfrak{R}(z)=2^{-n}\Tr(z)=\langle z,1\rangle$. Moreover, for all $z\in\tilde{\Bn}_{n+1}^{+}$ and $E\in\mathcal{E}_{n}$, we have $\mathfrak{R}(z^{E})=\mathfrak{R}(z)$.

Let $X$ be a partition of $[\![n+1]\!]$. The subgroup $H_{\diag,X}\leq\diag_{n+1}^{+}$, consists of matrices $E\in~\diag_{n+1}^{+}$ such that for every subset $A=\{i_{1},\dots,i_{k}\}\in X$, the product $E_{i_{1}i_{1}}\cdots E_{i_{k}i_{k}}=1$. Let $H_{X}=\Pi^{-1}[H_{\diag,X}]\leq\Qt_{n+1}$, where $\Pi:\Qt_{n+1}\to\diag_{n+1}^{+}$ is the restriction of $\Pi:\Spin_{n+1}\to\SO_{n+1}$. For a permutation $\sigma\in\Sn_{n+1}$, let $X_{\sigma}$ denote its cycle partition, and define $H_{\sigma}=H_{X_{\sigma}}$. In this case, $|H_{\sigma}|=2^{n+2-c}$, where c is the number of cycles of $\sigma$. Given a preancestry $\varepsilon_0$, the refinement $X_{\varepsilon_0}$ of $X_{\sigma}$ is such that: whenever $\varepsilon_0(k)=0$ and the $k$-th crossing involves $(i_0, i_1)$, the pair $\{i_0, i_1\}$ must be contained in some set $A$ in $X_{\varepsilon_0}$.

For a given $z\in\acute{\sigma}\Qt_{n+1}$, $\NL_{\varepsilon_0}(z)$ is the number of ancestries $\varepsilon$ associated with a given preancestry $\varepsilon_0$, such that $P(\varepsilon)=(\acute{a}_{i_1})^{\sig(\varepsilon(1))}\cdots(\acute{a}_{i_\ell})^{\sig(\varepsilon(\ell))}=z$.
We have 
\begin{equation}\label{sumneg}
    \NL_{\varepsilon_0}(z)-\NL_{\varepsilon_0}(-z)=2^{\frac{\ell-2d}{2}}\mathfrak{R}(z),
\end{equation}
and for $z=qz_0$ with $\mathfrak{R}(z)>0$, we have
\begin{equation}\label{sumpos}
\NL_{\varepsilon_0}(z)+\NL_{\varepsilon_0}(-z)=
\begin{cases}
    2^{\ell-2d+1}/|H_{\varepsilon_0}|, \quad q\in H_{\varepsilon_0},\\
    0, \qquad\qquad\qquad q\notin H_{\varepsilon_0}.
\end{cases}
\end{equation}
The special case where $d=0$ yields:
\begin{equation}\label{sumdim0}
\NL_{\varepsilon_0}(z)=2^{\ell-n+b-1}+2^{\frac{\ell}{2}-1}\mathfrak{R}(z).    
\end{equation}

There are $2^{n-b}$ thin ancestries, where $b=\block(\sigma)$. We assume for now on that $\sigma$ does not block, i.e., $b=0$.

The action of $\mathcal{E}_n$ determines the orbit size. 
If $\mathfrak{R}(z)=0$, the orbit of $z$ has size $2^{n-c+2}$; otherwise, the orbits of $z$ and $-z$ are disjoint, each of size $2^{n-c+1}$.
The isotropy group of $\acute{\sigma}$ has order $2^{\tilde{c}}$ with $c-2\leq \tilde{c} \leq c$, where $c$ is the number of cycles of $\sigma$.
The orbit of $\acute{\sigma}$ under the action of $\mathcal{E}_n$ is $\acute{\sigma}^{\mathcal{E}_n}=\{\acute{\sigma}^{E}, E\in\mathcal{E}_n\}.$
For $z\in~\acute{\sigma}\Qt_{n+1}$, we have 
\begin{equation}\label{sumthin}
\NL_{thin}(z)=
\begin{cases}
    2^{n-\tilde{c}},\hspace{1.0em} z\in\acute{\sigma}^{\mathcal{E}_n},\\
    0, \hspace{2.3em} z\notin\acute{\sigma}^{\mathcal{E}_n},
\end{cases}\quad \text{with}\quad |\acute{\sigma}^{\mathcal{E}_n}|=2^{\tilde{c}}.
\end{equation}

\subsection{Bruhat Stratification}

The Bruhat decomposition of $\SO_{n+1}$, or Bruhat stratification with signs, is
\[
\SO_{n+1}=\bigsqcup _{P\in\Bn^{+}_{n+1}} \Bru_{P},\quad \Bru_{P}=(\Up_{n+1}^{+}P\Up_{n+1}^{+})\cap\SO_{n+1},\quad P\in\Bn^{+}_{n+1}.
\]
Its lift to $\Spin_{n+1}$ gives $\Bru_{\sigma}=\Pi^{-1}[\Bru_{\sigma}^{\SO}]$, which has $2^{n+1}$ connected components, each containing an element $z\in\acute{\sigma}\Qt_{n+1}$.
The connected component containing $z\in\tilde{\Bn}^{+}_{n+1}$ is the smooth contractible submanifold $\Bru_{z}\subset\Spin_{n+1}$ of dimension $\ell=\inv(\sigma)$. Therefore, 
\[
\Bru_{\sigma}=\bigsqcup_{z\in\acute{\sigma}\Qt_{n+1}}\Bru_{z}, \quad \Spin_{n+1}=\bigsqcup_{z\in\tilde{\Bn}_{n+1}^{+}}\Bru_{z}.
\]

The set $\Bru_{\acute{\sigma}}$ can be parametrized using $\alpha_{i_k}(\theta_k)$, where $\sigma=~a_{i_1}\cdots a_{i_k}\in~\Sn_{n+1}$ is a reduced word for $\sigma$, as follows:
\[
\Bru_{\acute{\sigma}}=\{\alpha_{i_1}(\theta_1)\cdots\alpha_{i_k}(\theta_k); \theta_i\in(0,\pi)\}.\]
The strong Bruhat order can be expressed as 
\[
\sigma_0\leq\sigma_1 \iff \Bru_{\sigma_0}\subseteq\overline{\Bru_{\sigma_1}},
\]
and the lift Bruhat order on $\tilde{B}^{+}_{n+1}$ is 
\[
z_0\leq z_1 \iff \Bru_{z_0}\subseteq\overline{\Bru_{z_1}},\quad z_0,z_1\in\tilde{\Bn}^{+}_{n+1}.
\]
Ancestries inherit a partial order from the lifted Bruhat order: given ancestries $\varepsilon$ and $\tilde{\varepsilon}$, with sequences $(\varrho_k)$ and $(\tilde{\varrho_k})$, we write
\[
\varepsilon\preceq\tilde{\varepsilon} \iff (\forall k,\varrho_k\leq\tilde{\varrho}_k).
\]
Thus, $\varepsilon\preceq\tilde{\varepsilon}$ implies $P(\varepsilon)=P(\tilde{\varepsilon})$.
The set $U_{\varepsilon}=\{\tilde{\varepsilon}\,|\, \varepsilon\preceq\tilde{\varepsilon}\}$ is the upper set generated by $\varepsilon$.

For ancestries of dimension 0, $\varepsilon$ is $\preceq$-maximal.
For ancestries with $\dim(\varepsilon)>0$, we define $\tilde{\varepsilon}$ setting $\tilde{\varepsilon}(k)=\sig(\varepsilon(k))$. 
This ensures $\tilde{\varepsilon}\preceq\varepsilon$. 
In a wiring diagram, the ancestry $\tilde{\varepsilon}$ is obtained by replacing each square by a disk of the same color.

When $\dim(\varepsilon)=1$, the upper set $U_{\varepsilon}$ contains $\varepsilon$ and two ancestries of dimension 0. 
One is $\tilde{\varepsilon}=\sig(\varepsilon)$, the other is obtained from $\tilde{\varepsilon}$ performing a click in the region corresponding to $\varepsilon$.

\begin{Ex}\label{d=1}
For $\sigma=[321]=a_1a_2a_1\in\Sn_3$, Figure \ref{upperseteta} shows an ancestry of dimension 1, $\varepsilon=(-2,1,2)$, and the upper set generated by it.

\begin{figure}[H]
    \centering
    \includegraphics[scale=1.2]{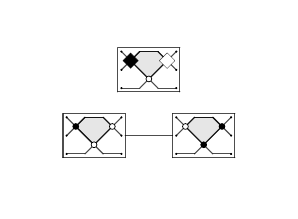}
    \caption{Upper set of $\varepsilon=(\sbd\circ\sd)$.}
    \label{upperseteta}
\end{figure}
The upper set consists of two ancestries of dimension 0 and one of dimension 1: $U_\varepsilon=\{(\sbd\circ\sd),(\sbc\circ\circ),(\circ\sbc\sbc)\}$.
\end{Ex}

When a click can be performed in a region, we generate an ancestry of dimension 1 represented by an edge, connecting two ancestries of dimension 0: one with the same signs as the ancestry of dimension 1, and the other with signs altered by the click.

Following the Bruhat decomposition, the nilpotent group $\Lo_{n+1}^{1}$ can be partitioned as
\[
\Lo_{n+1}^{1}=\bigsqcup_{\sigma\in\Sn_{n+1}}\Bl_{\sigma}, \quad \Bl_{\sigma} = \{L \in \Lo_{n+1}^{1} \mid \exists U_0, U_1 \in \Up_{n+1}, L = U_0 P_{\sigma} U_1\}.
\]

The orthogonal factor $Q \in \SO_{n+1}$ from the QR factorization of $L\in\Lo^{1}_{n+1}$ defines a smooth map $
\mathbf{Q}_{\SO} : \Lo_{n+1}^{1} \to \SO_{n+1}$, with $
\mathbf{Q}_{\SO}(L) = Q$, and its lift $\mathbf{Q}:\Lo_{n+1}^{1}\to\Spin_{n+1}$, with $\mathbf{Q}(I)=1$. The inverse map $\mathbf{L}=\mathbf{Q}^{-1}:\mathcal{U}_1\to\Lo_{n+1}^{1}$, is also a smooth diffeomorphism and corresponds to the $LU$ factorization.

The open contractible neighborhood of the identity element $1\in\Spin_{n+1}$ given by $\mathcal{U}_1=\mathbf{Q}[\Lo_{n+1}^{1}]=\grave{\eta}\Bru_{\acute{\eta}}\subset\Spin_{n+1}$ is a top-dimensional Bruhat cell for the basis described by $\grave{\eta}$. 
Consequently, each $\Bl_{\sigma}$ can be decomposed into $2^{n+1}$ subsets $\Bl_{z}=\mathbf{Q}^{-1}[\Bru_{z}]=\mathbf{Q}^{-1}[\Bru_{z}\cap\mathcal{U}_1]\subseteq\Lo_{n+1}^{1}$, the intersection of two Bruhat cells for different bases.

Let $\lambda_i(t)=\exp(t\mathfrak{l}_i)=I+t\mathfrak{l}_i$ be the one-parameter subgroup of $\Lo_{n+1}^{1}$, with $\mathfrak{l}_i\in\mathfrak{lo}_{n+1}$ the lower triangular matrix with a single nonzero entry $(\mathfrak{l}_i)_{(i+1,i)}=1$.
The group $\mathcal{E}_{n}=\{\pm1\}^{[\![n]\!]}$ acts on $\Lo_{n+1}^{1}$ by $(\lambda_{i}(t))^{E}=\lambda_i(E_it)$.
This action modifies the entries of the matrices in $\Lo_{n+1}^{1}$ according to the signs specified by $E\in\mathcal{E}_n$.
Hence, the sets $\Bl_{z}$ and $\Bl_{z^{E}}$ are diffeomorphic. 

To determine the homotopy type of $\Bl_{\sigma}$, we decompose $\acute{\sigma}\Qt_{n+1}$ into $\mathcal{E}_n$-orbits. For each orbit, we select a representative $z$ and determine the homotopy type of $\Bl_{z}$.

The set $\Pos_{\eta}$ of totally positive matrices is a contractible connected component of $\Bl_{\eta}$, and we have $L\in\Pos_{\sigma}$ if and only if there exist positive numbers $t_1,\ldots,t_\ell$ such that $L=\lambda_{i_1}(t_1)\cdots\lambda_{i_\ell}(t_\ell).$
The set $\Pos_{\sigma}$ is also a contractible connected component of $\Bl_{\sigma}$.
For an ancestry $\varepsilon$ with $\dim(\varepsilon)=0$, define 
\begin{equation}\label{bls0}
    \BLS_{\varepsilon}=\{\lambda_{i_1}(t_1)\cdots\lambda_{i_l}(t_l)\;|\;t_k\in\mathbb{R}\backslash \{0\},\;\sig(t_k)=\varepsilon_k\}\subset\Bl_{\sigma}.
\end{equation}
It follows that $\BLS_{\varepsilon}\subseteq\Bl_{z}$, with $z=P(\varepsilon)=(\acute{a}_{i_1})^{\varepsilon(1)}\cdots(\acute{a}_{i_\ell})^{\varepsilon(\ell)}\in\acute{\sigma}\Qt_{n+1}.$

Contractible components corresponding to thin ancestries are obtained as $\BLS_{\varepsilon}=(\Pos_{\sigma})^{E}$, while the complement $
\Bl_{z,thick}=\Bl_{z}\backslash\bigcup_{\varepsilon \text{ thin}}\BLS_{\varepsilon}$ forms the thick part. 

It is established from \cite{shapiro1998skew} that $\Bl_{\eta}$ comprises $3\cdot2^{n}$ connected components. Additionally, from \cite{alves2022onthehomotopy} we know that for $n\geq5$, the $3\cdot2^n$ connected components of $\Bl_{\eta}$ are $\Pos_{\eta}^{E}$, 
with $E\in\mathcal{E}_n$, and 
$\Bl_{z,\text{ thick}}$, with $z\in\acute{\eta}\Qt_{n+1}.$ 
The first list are the $2^{n}$ thin connected components; the second are the $2^{n+1}$ thick connected components.

Finally, sequences of parameters $(\theta_k)_{0 \leq k \leq \ell}$ and elements $(z_k)_{0 \leq k \leq \ell}$ allow reconstructing ancestries $\varepsilon$ whitin $\Bl_{\sigma}$, so that
\[
\Bl_{\sigma}=\bigsqcup_{\varepsilon}\BLS_{\varepsilon},\qquad \Bl_z=\bigsqcup_{P(\varepsilon)=z}\BLS_{\varepsilon},
\]
where $\varepsilon$ varies over ancestries.
Moreover, $\sig(\varepsilon(k))=\sig(\tilde{\theta}_k)$, with $\varepsilon(k)=-2$ if and only if $\rho_k<\rho_{k-1}; \varepsilon(k)=+2$ if and only if $\rho_k>\rho_{k-1}$.

For any preancestry $\varepsilon_0$ and $z\in\acute{\sigma}\Qt_{n+1}$, we have $\NL_{\varepsilon_0}(z)=N_{\varepsilon_0}(z)$, where $N_{\varepsilon_0}(z)$ is the number of ancestries $\varepsilon$ for which $\BLS_{\varepsilon}\subset\Bl_{z}$, thus Equations \ref{sumneg}, \ref{sumpos} and \ref{sumdim0} hold for $N_{\varepsilon_0}(z)$.

Finally, the set $\BLS_{\varepsilon}$ is a smooth submanifold diffeomorphic to $\mathbb{R}^{\ell-d}$. Furthermore, the union of all $\BLS_{\varepsilon}$ is an open subset of the larger space.

\subsection{CW-Complexes}

The concept behind the CW complex $\Blc_{\sigma}$ is that it behaves as a dual cell structure to the stratification. From \cite{alves2021remarks}, for each $\sigma\in\Sn_{n+1}$, there exists a finite CW complex $\Blc_{\sigma}$ and a homotopy equivalence $i_{\sigma}:\Blc_{\sigma}\to\Bl_{\sigma}$, where the cells $\Blc_{\varepsilon}$ correspond to ancestries $\varepsilon$ and have matching dimension.

The Euler characteristic of $\Bl_z$ can be computed via preancestries:
\[
\chi(\Bl_z)=\sum_{\varepsilon_0}(-1)^{\dim(\varepsilon_0)}N_{\varepsilon_0}(z).
\]

In general, for an upper set $U$ of ancestries, we have 
\[
\BLS_{U}=\bigcup_{\varepsilon\in U}\BLS_{\varepsilon}\subseteq\Bl_{\sigma}, \qquad
\Blc_{U}=\bigcup_{\varepsilon\in U}\Blc_{\varepsilon}\subseteq \Blc_{\sigma}.
\]
Here $\BLS_{U}$ is open, $\Blc_{U}$ is closed and a CW complex, and the restriction $i_{\sigma}|_{\Blc_{U}}:\Blc_{U}\to\BLS_{U}$ is a homotopy equivalence.

The image of the gluing map for $\Blc_{\varepsilon}$ is contained in $\Blc_{U^{*}_{\varepsilon}}$, where $U^{*}_{\varepsilon}=U_{\varepsilon}\backslash\{\varepsilon\}$.
Consider an ancestry $\varepsilon$ with $d=\dim(\varepsilon)>0$. Define two non empty subsets $U^{\pm}_{\varepsilon}\subset U^{*}_{\varepsilon}$. Let $k_{\bullet}$ denote the largest index $k$ such that $\varepsilon(k)=-2$. We have $\varrho_{k_{\bullet}}=\varrho_{k_{\bullet}-1}\acute{a}_{i_k}$. Define $\varrho^{-}_{k_{\bullet}}=\varrho_{k_{\bullet}-1}$ and $\varrho^{+}_{k_{\bullet}}=\varrho_{k_{\bullet}-1}\hat{a}_{i_k}$. For $\tilde{\varepsilon}\in U_{\varepsilon}^{*}$, let $(\tilde{\varrho}_k)_{0\leq k \leq l}$ be defined as usual. Then: 
\[
\tilde{\varepsilon}\in U^{\pm}_{\varepsilon} \iff \tilde{\varrho}_{k_{\bullet}}=\varrho_{k_{\bullet}}^{\pm} \text{ and } \tilde{\varrho}_k=\varrho_k \text{ for } 0\leq k < k_{\bullet}.
\]
These sets $U^{\pm}_{\varepsilon}$ are disjoint.
From that, we then define the sets near $\BLS_{\varepsilon}$ as:
\[
\BLS_{\varepsilon}^{\pm}=\bigcup_{\tilde{\varepsilon}\in U^{\pm}_{\varepsilon}}\BLS_{\tilde{\varepsilon}}.
\]
Lemma 16.4 from \cite{alves2021remarks} describes the sets near $\BLS_{\varepsilon}$.

An ancestry $\varepsilon$ of dimension $d=\dim(\varepsilon)>0$ is called \textit{tame} if $\BLS_{U^{*}_{\varepsilon}}$ is homotopically equivalent to $\mathbb{S}^{d-1}$, and its intersection with $\BLS_{\varepsilon}^{\pm}$ defines generators of $H^{d-1}(\BLS_{U^{*}_{\varepsilon}};\mathbb{Z})\approx\mathbb{Z}$.
If these conditions are not satisfied, $\varepsilon$ is called wild.

In terms of $\Blc_{U^{*}_{\varepsilon}}$, tameness means that $\Blc_{U^{*}_{\varepsilon}}$ is homotopically equivalent to $\mathbb{S}^{d-1}$, and that we can construct cocycles $\omega_{\Blc}^{\pm}\in Z^{d-1}(\Blc_{U^{*}_{\varepsilon}};\mathbb{Z})$ by considering elements of $U^{*}_{\varepsilon}$ of dimension $d-1$ as cells of $\Blc_{U^{*}_{\varepsilon}}$, these cocycles $\omega_{\Blc}^{\pm}$ are generators of $H^{d-1}\approx\mathbb{Z}$.

In this case, the gluing map $\beta:\mathbb{S}^{d-1}\to \Blc_{U^{*}{\varepsilon}}$ is a homotopy equivalence, and attaching the $d$-cell $\Blc_{\varepsilon}$ yields $\Blc_{U_\varepsilon}$.
Up to the present, no examples of wild ancestries have been found.

Therefore, we can state the Fact \ref{imrn} from the Introduction.

\section{Wiring Diagram Decomposition}\label{wdd}

This section begins with a brief review of the block decomposition of wiring diagrams. We then introduce three types of decompositions, called splits, which are a key contribution of this work and offer new tools for studying their properties.

\subsection{Block decomposition}

Recall that $\sigma\in\Sn_{n+1}$ blocks at $j$ if and only if $a_j$ does not appear in a reduced word for $\sigma$.

\begin{Prop}
If $\sigma\in\Sn_{n+1}$ blocks at $j$ 
then there exist permutations $\sigma_0\in\Sn_j$ and $\sigma_1\in\Sn_{n+1-j}$ such that $\sigma=\sigma_0\oplus\sigma_1$.
\end{Prop}

\begin{Ex}
Let $\sigma=[2 3 1 6 4 5]=a_2a_1a_4a_5\in\Sn_6$. 
\begin{figure}[H]
    \centering
    \includegraphics[scale=0.09]{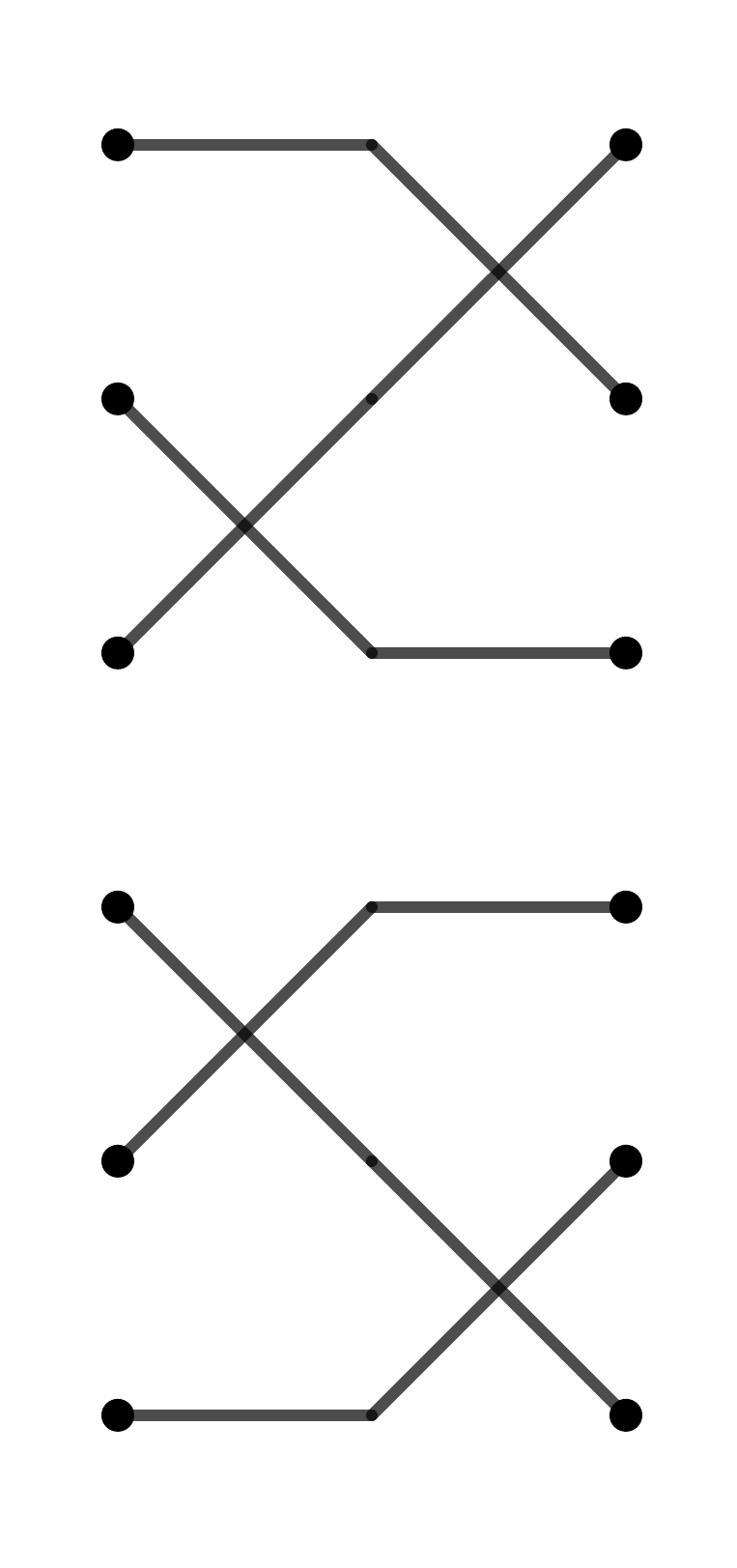}
    \caption{Wiring diagram of $\sigma=a_2a_1a_4a_5\in\Sn_6$.}
    \label{_4.1}
\end{figure} 

Note that $\sigma$ blocks at 3. 
This permutation can be represented as a sum of two permutations: 
$\sigma=\sigma_1\oplus\sigma_2$, 
where $\sigma_1=a_2a_1\in\Sn_3$ and $\sigma_2=a_1a_2\in\Sn_3$.
\end{Ex}

\begin{Lem}\label{blocklemma}
Let $\sigma=a_{i_1}\cdots a_{i_k}\in\Sn_{n+1}$ be a reduced word. 
If $\sigma$ blocks at $j$ such that, 
$\sigma=\sigma_0\oplus\sigma_1$ with $\sigma_0\in\Sn_j$ and $\sigma\in\Sn_{n+1-j}$ 
then $\Bl_{\sigma}=\Bl_{\sigma_0}\oplus\Bl_{\sigma_1}$.
\end{Lem}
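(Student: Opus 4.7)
The plan is to prove both inclusions in the claimed equality. Since $\sigma = \sigma_0 \oplus \sigma_1$, the Remark following Definition \ref{sumpermutation} gives $P_\sigma = P_{\sigma_0} \oplus P_{\sigma_1}$, so $P_\sigma$ is block diagonal with blocks of sizes $j$ and $n+1-j$. I interpret $\Bl_{\sigma_0} \oplus \Bl_{\sigma_1}$ as the set of block diagonal matrices $L_0 \oplus L_1$ with $L_0 \in \Bl_{\sigma_0} \subset \Lo^1_j$ and $L_1 \in \Bl_{\sigma_1} \subset \Lo^1_{n+1-j}$.

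For the inclusion $\Bl_{\sigma_0} \oplus \Bl_{\sigma_1} \subseteq \Bl_\sigma$, I will write $L_0 = U\,P_{\sigma_0}\,V$ and $L_1 = U'\,P_{\sigma_1}\,V'$ with upper triangular factors, then note that
\[
L_0 \oplus L_1 \;=\; (U \oplus U')\,(P_{\sigma_0} \oplus P_{\sigma_1})\,(V \oplus V') \;=\; (U \oplus U')\,P_\sigma\,(V \oplus V'),
\]
where the block diagonal factors remain upper triangular in $\Up_{n+1}$, so the product lies in $\Bl_\sigma$.

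The substantive direction is the reverse inclusion. Given $L = U_0\,P_\sigma\,U_1 \in \Bl_\sigma$, I will partition $U_0$ and $U_1$ into $2 \times 2$ block form
\[
U_k \;=\; \begin{pmatrix} A_k & B_k \\ 0 & C_k \end{pmatrix},
\]
with $A_k$ of size $j \times j$ and $C_k$ of size $(n+1-j) \times (n+1-j)$, both upper triangular. A direct block computation using $P_\sigma = P_{\sigma_0} \oplus P_{\sigma_1}$ yields
\[
U_0\,P_\sigma\,U_1 \;=\; \begin{pmatrix} A_0\,P_{\sigma_0}\,A_1 & A_0\,P_{\sigma_0}\,B_1 + B_0\,P_{\sigma_1}\,C_1 \\ 0 & C_0\,P_{\sigma_1}\,C_1 \end{pmatrix}.
\]
The key observation is that the lower-left block of this product vanishes automatically, which forces the lower-left block of $L$ to vanish as well. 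Combined with the upper-right block of $L$ being zero (since $L \in \Lo^1_{n+1}$), $L$ is block diagonal $L = L_0 \oplus L_1$, with $L_0 = A_0\,P_{\sigma_0}\,A_1 \in \Bl_{\sigma_0}$ and $L_1 = C_0\,P_{\sigma_1}\,C_1 \in \Bl_{\sigma_1}$. The only mild subtlety I anticipate is recognizing that block diagonality of $L$ is not assumed but \emph{forced} by the interaction of the block structure of $P_\sigma$ with upper triangularity of $U_0,U_1$; beyond this, the argument is a routine block-matrix manipulation.
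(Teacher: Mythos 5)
Your proof is correct, and it is in fact more complete than the paper's own argument. For the inclusion $\Bl_{\sigma_0}\oplus\Bl_{\sigma_1}\subseteq\Bl_{\sigma}$ you do exactly what the paper does: factor $L_0$ and $L_1$ and assemble $(U\oplus U')P_{\sigma}(V\oplus V')$. The difference is in the reverse inclusion. The paper proves only the easy direction and then simply asserts ``$L\in\Bl_{\sigma}$ if and only if $L=L_0\oplus L_1$,'' leaving the substantive half — that every element of $\Bl_{\sigma}$ is automatically block diagonal with blocks in the smaller Bruhat cells — without an explicit argument. You supply it: writing $U_0,U_1$ in $2\times 2$ block upper triangular form and multiplying against $P_{\sigma}=P_{\sigma_0}\oplus P_{\sigma_1}$ shows the lower-left block of $U_0P_{\sigma}U_1$ vanishes identically, and since $L\in\Lo^1_{n+1}$ kills the upper-right block, $L$ is forced to be $\left(A_0P_{\sigma_0}A_1\right)\oplus\left(C_0P_{\sigma_1}C_1\right)$ with $A_k,C_k$ invertible upper triangular (being diagonal blocks of elements of $\Up_{n+1}$), hence $L_0\in\Bl_{\sigma_0}$ and $L_1\in\Bl_{\sigma_1}$. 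This block computation is exactly the missing content, so your version buys a genuinely complete proof where the paper's is only half of one; the only cosmetic quibble is the phrasing ``forces the lower-left block of $L$ to vanish as well,'' which is correct but worth stating simply as: the lower-left block of $L$ equals that of the product, which is zero.
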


\begin{proof}
If $\sigma\in\Sn_{n+1}$ blocks at $j$ such that $\sigma=\sigma_0\oplus\sigma_1$, with $\sigma_0\in\Sn_j$ and $\sigma_1\in\Sn_{n+1-j}$, then the permutation matrix $P_{\sigma}$ has two diagonal blocks, $P_{\sigma_0}$ and $P_{\sigma_1}$, such that 
$P_{\sigma}=P_{\sigma_0}\oplus P_{\sigma_1}$.

Let $L\in\Lo_{n+1}^{1}$. Suppose that there exist $L_0\in\Bl_{\sigma_0}$ and $L_1\in\Bl_{\sigma_1}$, such that $L=L_0\oplus L_1$. Therefore,

\begin{equation*}
\begin{split}
L = L_0\oplus L_1= &(U_1P_{\sigma_0}U_2)\oplus(U_3P_{\sigma_1}U_4) \\
 & = (U_1\oplus U_3)(P_{\sigma_0}U_2\oplus P_{\sigma_1}U_4) \\
 & = (U_1\oplus U_3)(P_{\sigma_0}\oplus P_{\sigma_1})(U_2\oplus U_4).
\end{split}
\end{equation*}
Since, $U_1\oplus U_3,\, U_2\oplus U_4\in\Up_{n+1}$, and $P_{\sigma_0}\oplus P_{\sigma_1}=P_{\sigma}$, then 
\[
L=\tilde{U}_1P_{\sigma}\tilde{U}_2,
\]
where $\tilde{U}_1=(U_1\oplus U_3)$ and $\tilde{U}_2=(U_2\oplus U_4)$.
Therefore, $L\in\Bl_{\sigma}$. 

In conclusion, $L\in\Bl_{\sigma}$ if and only if there exist $L_0\in\Bl_{\sigma_0}$ and $L_1\in\Bl_{\sigma_1}$, such that $L=L_0\oplus L_1$.
\end{proof}

\subsection{Split type 1}

Recall that $r_i$ is the horizontal row between the starting points of two adjacent wires at height $i+\frac{1}{2}$.

\begin{Def}
If a curve can be traced in the wiring diagram from $r_i$ 
to $r_{i+1}$, or from $r_{i+1}$ 
to $r_i$, such that it transversely crosses only one wire, without passing through an inversion, then a \textit{split type 1} can be performed on the diagram. This split is said to be performed at row $r_i$. The operation decomposes the diagram into two parts, resulting in permutations $\sigma_1\in\Sn_{i+1}$ and $\sigma_2\in\Sn_{n+1-i}$.  
\end{Def}

The permutations $\sigma_1 \in \Sn_{j+1}$ and $\sigma_2 \in \Sn_{n+1-j}$ 
are obtained by joining the wire that was cut with the dot
that does not have a wire entering or leaving it. 
It is important to note that the resulting words are still reduced.

\begin{Def}
Let $\sigma = a_{i_1} \dots a_{i_l}$ be a reduced word for a permutation $\sigma \in~ \Sn_{n+1}$. If a split type 1 can be performed at $r_j$, then:
\begin{itemize}
\renewcommand\labelitemi{$\bullet$}
    \item $\sigma_1=a_{i_{k_1}}\cdots a_{i_{k_{m}}}\in \Sn_{j+1}, \forall i_{k_s}\leq j$,
        \item $\sigma_2=a_{i_{k_1}-j}\cdots a_{i_{k_{n}}-j}\in \Sn_{n+1-j}, \quad \forall i_{k_s}> j$,
\end{itemize}
where $k_s\leq k_{s+1}$, $m=\inv(\sigma_1)$ and $n=\inv(\sigma_2)$.
\end{Def}

\begin{Ex}\label{exsplit1}
Let $\sigma=[325614]=a_1a_4a_3a_2a_1a_5a_4\in\Sn_6$.

We can trace a curve on the diagram, crossing the fifth wire and separating it into two parts.

 \begin{figure}[H]
    \centering
    \includegraphics[scale=0.07]{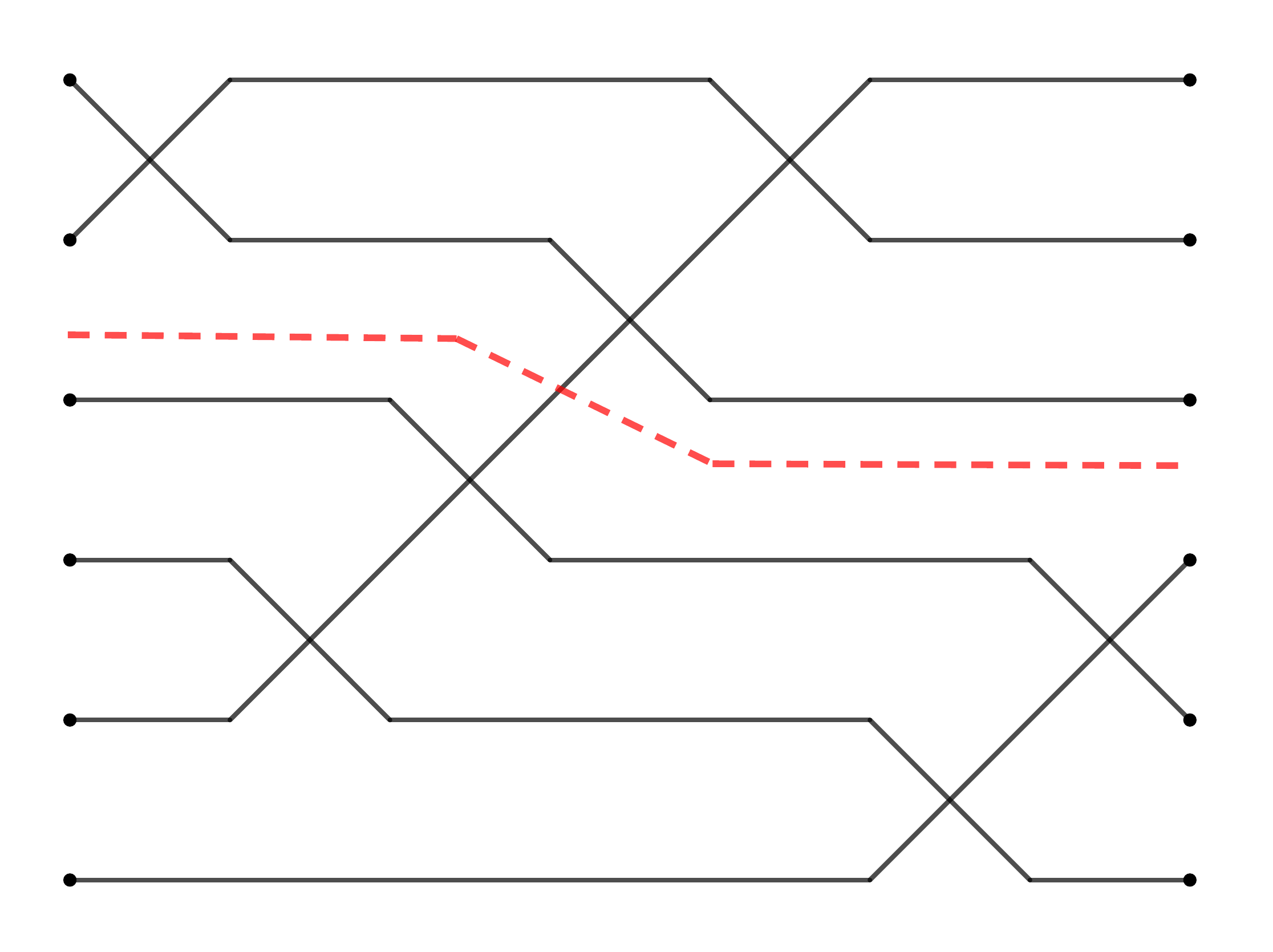}    
    \caption{First step to apply the split type 1 on the wiring diagram of the permutation $\sigma=a_1a_4a_3a_2a_1a_5a_4\in\Sn_6$.}
\end{figure}

The upper part is equivalent to $\sigma_{1}=a_1a_2a_1\in\Sn_3$, 
and the lower part is equivalent to $\sigma_{2}=a_2a_1a_3a_2\in\Sn_4$.
\end{Ex}
 \begin{figure}[H]
    \centering
    \includegraphics[scale=1.1]{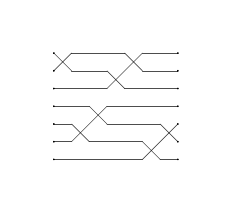}    
    \caption{Result of apply the split type 1.}
\end{figure}

\begin{Remark}\label{Afirm}
Let $\sigma = a_{i_1} \cdots a_{i_l} \in S_{n+1}$ be a reduced word, and let $\varepsilon_{0}$ be a preancestry for $\sigma$. Suppose that there exist indices $k_1 < k_2$ such that $\varepsilon_{0}(k_i) =~-2$, $i\in\{1,2\}$. If $|i_{k_1} - i_{k_2}| > 1$ then, for all $k$, the sequences
\[
\varepsilon_1(k) =
\begin{cases}
    0, & i_{k} \leq i_{k_1},\\
    \varepsilon_0(k), & i_{k} > i_{k_1},
\end{cases}
\quad \text{and} \quad
\varepsilon_2 (k)=
\begin{cases}
    0, & i_{k} \geq i_{k_2},\\
    \varepsilon_0(k), & i_{k} < i_{k_2},
\end{cases}
\]
are also preancestries for $\sigma$. 
This follows easily from the definition of a preancestry and we omit the details of the proof. This remark will be used to prove the next lemma.
\end{Remark}

\begin{Lem}\label{Lema split 1}
Let $\sigma = a_{i_1}\cdots a_{i_l}\in\Sn_{n+1}$ be a reduced word. 
If a split type 1 can be performed at $\sigma\in\Sn_{n+1}$, 
resulting in permutations $\sigma_1\in\Sn_{i+1}$ and $\sigma_2\in\Sn_{n+1-i}$, 
then $\Blc_{\sigma}=\Blc_{\sigma_1}\times \Blc_{\sigma_2}$.
\end{Lem}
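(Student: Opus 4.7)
The plan is to establish a dimension-preserving, order-preserving bijection between ancestries of $\sigma$ and pairs $(\varepsilon_1,\varepsilon_2)$ of ancestries of $\sigma_1$ and $\sigma_2$, and then promote this to an isomorphism of CW complexes via the recursive construction of $\Blc$ given in the proof of Fact \ref{theo2}.

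First, I would observe that a split type 1 at $r_i$ forces $\inv(\sigma)=\inv(\sigma_1)+\inv(\sigma_2)$: the single-wire-crossing curve certifies that every inversion of $\sigma$ involves wires lying entirely on one side of the split, so $\Inv(\sigma)$ partitions cleanly into two subsets identified with $\Inv(\sigma_1)$ and $\Inv(\sigma_2)$. Consequently there is a reduced word for $\sigma$ of the form (reduced word for $\sigma_1$, using only indices $\leq i$) concatenated with (reduced word for $\sigma_2$, shifted back to use only indices $>i$). I would fix such a word. Using it, an ancestry $\varepsilon$ of $\sigma$, viewed as an assignment in $\{\pm 1,\pm 2\}$ on the $l=\inv(\sigma)$ crossings, restricts to sequences $\varepsilon_1$ on the first $\inv(\sigma_1)$ positions and $\varepsilon_2$ on the last $\inv(\sigma_2)$.

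I would then verify that $\varepsilon_1$ and $\varepsilon_2$ satisfy the ancestry axioms for $\sigma_1$ and $\sigma_2$, and conversely that any such pair reassembles into an ancestry of $\sigma$. The preancestry condition factors because the intermediate permutation reached after the upper block stabilizes the split and is recognizable as a composition of top elements on either half of $\Sn_{n+1}$; the terminal group-theoretic condition $\varrho_l \in \acute{\eta}\Qt_{n+1}$ factors similarly after checking that $\acute{\eta}$ decomposes, up to a controllable boundary sign in $\Qt_{n+1}$, as a product inside $\tilde{\Bn}^+_{i+1}\cdot\tilde{\Bn}^+_{n+1-i}$. The resulting bijection $\varepsilon \leftrightarrow (\varepsilon_1,\varepsilon_2)$ is additive in dimension ($\dim(\varepsilon) = \dim(\varepsilon_1) + \dim(\varepsilon_2)$) and preserves the partial order $\preceq$ componentwise, so upper sets factor as $U_\varepsilon \cong U_{\varepsilon_1}\times U_{\varepsilon_2}$ and the strata $\BLS_\varepsilon$ decompose as products.

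Finally, by the recursive cell-attachment construction in the proof of Fact \ref{theo2}, $\Blc_\sigma$ is built cell by cell in an order compatible with $\preceq$, with each cell's glueing map determined by its upper set through Fact \ref{tubular}. Since the poset of ancestries factors as a product, the product CW complex $\Blc_{\sigma_1}\times\Blc_{\sigma_2}$ has the same cells and compatible glueing data: a product cell $\Blc_{\varepsilon_1}\times\Blc_{\varepsilon_2}$ of dimension $d_1+d_2$ attaches along the boundary of the product disk to the product upper set, matching the tubular description near $\BLS_\varepsilon$. This yields the desired identification $\Blc_\sigma = \Blc_{\sigma_1}\times\Blc_{\sigma_2}$. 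The hardest step is the group-theoretic verification in $\tilde{\Bn}^+_{n+1}$: showing that the chain $(\varrho_k)$ associated to $\varepsilon$ splits as a product of chains in $\tilde{\Bn}^+_{i+1}$ and $\tilde{\Bn}^+_{n+1-i}$ despite the non-commuting pair $\acute{a}_i, \acute{a}_{i+1}$ straddling the split. This works because no braid relation across the split is ever invoked in the chosen word (all upper letters precede all lower letters), but one must still carefully account for the boundary action of $\Qt_{n+1}$ to match terminal classes on both sides.
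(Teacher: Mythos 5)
Your overall plan (match ancestries of $\sigma$ with pairs of ancestries of $\sigma_1,\sigma_2$ compatibly with dimension and $\preceq$, then conclude the CW complexes multiply) is in the same spirit as the paper's proof, but two of your key steps have genuine gaps. First, the reduced word you propose to fix does not exist in general: for the paper's own split example $\sigma=a_1a_4a_3a_2a_1a_5a_4\in\Sn_6$ (Section \ref{case10}, split at $j=2$, so the ``upper'' letters are $a_1,a_2$ and the ``lower'' ones $a_3,a_4,a_5$), the relation $5^{\sigma}=1$ rules out any word in which every occurrence of $a_1,a_2$ precedes every occurrence of $a_3,a_4,a_5$, since under the paper's conventions a prefix in $\langle a_1,a_2\rangle$ fixes $5$ and a suffix in $\langle a_3,a_4,a_5\rangle$ cannot send $5$ to $1$. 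A separation with the lower letters first happens to exist in that example, but its existence in general is exactly what would need proof, and in any case the split data, the ancestries and the strata $\BLS_{\varepsilon}$ are all defined relative to the \emph{given} reduced word, so passing to a rearranged word silently assumes that $\Blc_{\sigma}$ itself (not merely its homotopy type) is independent of the word, which neither you nor the paper establishes.

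Second, and more fundamentally, the step you yourself call the hardest cannot work as described. Every ancestry of $\sigma$ has $\varrho_0=\acute{\eta}$ and its preancestry satisfies $\rho_0=\rho_l=\eta$, the top element of $\Sn_{n+1}$; but when both parts of the split are nontrivial, $\eta$ does not lie in $\langle a_1,\dots,a_j\rangle\cdot\langle a_{j+1},\dots,a_n\rangle$ nor in the reverse product (such a product can push at most one element of $\{1,\dots,j+1\}$ past position $j+1$, while $\eta$ pushes both $1$ and $2$ there). Hence $\acute{\eta}$ does \emph{not} decompose inside $\tilde{\Bn}^{+}_{j+1}\cdot\tilde{\Bn}^{+}_{n+1-j}$ ``up to a controllable boundary sign in $\Qt_{n+1}$'', and the chains $(\varrho_k)$, all of which start at $\acute{\eta}$, cannot be split into a pair of chains for $\sigma_1$ and $\sigma_2$ in the way you propose; the absence of braid relations across the split does not touch this obstruction, which sits at the endpoints of the chain, not in its middle. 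The paper avoids the issue entirely: it argues on the wiring diagram that a click in a region of the $\sigma_1$-part does not change the signs of the $\sigma_2$-part (giving the correspondence of $1$-skeleta) and that a preancestry of $\sigma$ is precisely a pair of preancestries for $\sigma_1$ and $\sigma_2$, so the cell data of $\Blc_{\sigma}$ agrees with that of the direct sum $\sigma_1\oplus\sigma_2\in\Sn_{n+2}$; the product structure is then imported from Lemma \ref{blocklemma}, which is proved at the level of matrices $L=L_0\oplus L_1$ and never requires the $\tilde{\Bn}^{+}$-valued chains to factor. To repair your argument you would have to either work at the level of the sign sequences $\varepsilon$ on the diagram, as the paper does, or compare $\sigma$ with $\sigma_1\oplus\sigma_2$ rather than attempting a factorization inside $\tilde{\Bn}^{+}_{n+1}$.
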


\begin{proof}
When split type 1 is applied, the permutation is decomposed into two parts, $\sigma_1$ and $\sigma_2$, such that no region in one part has inversions lying on the boundary of a region in the other part.
Consequently, performing a click in the region of $\sigma$ corresponding to $\sigma_1$ does not affect the signs of the inversions associated with~$\sigma_2$. 
This establishes a correspondence between 1-skeletons of the desired CW-complexes. 

By the Remark \ref{Afirm}, we have a correspondence to higher dimensional cells. If $\varepsilon_0$ is a preancestry for $\sigma$ such that $i_{k_1} < r_i < i_{k_2}$, then $\varepsilon_1$ is a preancestry for $\sigma_1$, and $\varepsilon_2$ is a preancestry for ~$\sigma_2$.

This implies that the CW complex of $\sigma\in\Sn_{n+1}$ is equivalent to that of $\sigma_1\oplus\sigma_2\in\Sn_{n+2}$. 
By Lemma \ref{blocklemma}, we have $\Blc_{\sigma}=\Blc_{\sigma_1}\times \Blc_{\sigma_2}$.

\end{proof}

\subsection{Split type 2}

\begin{Def}
Let $\sigma = a_{i_1} \cdots a_{i_l}$ be a reduced word for a permutation $\sigma \in~\Sn_{n+1}$. A \textit{split type 2} on a wiring diagram at inversion $a_{i_k}$ is a decomposition of the diagram into two parts, which satisfies the following conditions: \begin{enumerate}
\item For all $j\neq k$, $a_{i_k}\neq a_{i_j}$;
\item
The remaining words in either $\Sn_{i_k+1}$ and $\Sn_{n+1-i_k}$, or $\Sn_{i_k}$ and $\Sn_{n+2-i_k}$, remain reduced.
\end{enumerate}

The inversion $a_{i_k}$ is called a \textbf{tourist}.
\end{Def}

Note that the \textbf{tourist} is an inversion that does not affect the component; 
it just gets affected. One could say that the inversion 
only observes what is happening, like a tourist.

The move involves separating the wiring diagram into two 
parts in such a way that the inversion $a_{i_k}$ is in one 
of the two parts, the wires that were cut are then 
reconnected to the dots that do not have a wire entering or leaving. 
The other part is obtained by connecting the wires that we have cut.

A split type 2 can be performed at $a_{i_k}$ by drawing a line at height $i_k+1/4$ or $i_k+3/4$. 
In the first case, the inversion will lie in the upper subdiagram; in the second case, it will lie in the lower subdiagram. In both scenarios, the split is said to occur at the inversion $a_{i_k}$.

When split type 2 is applied at $a_{i_k}$, at height $i_k + 1/4$, the resulting permutations are $\sigma_1\in\Sn_{i_k}$ and $\sigma_2\in\Sn_{n+2-i_k}$. In the other case, the resulting permutations are $\sigma_1\in\Sn_{i_k+1}$ and $\sigma_2\in\Sn_{n+1-i_k}$.

\begin{Remark}\label{remark-tourist}
If the tourist is not at the boundary of any region, split type 1 can be applied.   
\end{Remark}

\begin{Def}
Let $\sigma = a_{i_1} \cdots a_{i_l}$ be a reduced word for a permutation $\sigma \in~\Sn_{n+1}$. If a split type 2 can be performed at $a_j$, then:
\begin{enumerate}
    \item For $j+\frac{1}{4}$, the resulting permutations are:
    \begin{itemize}
\renewcommand\labelitemi{$\bullet$}
    \item $\sigma_1=a_{i_{k_1}}\cdots a_{i_{k_{m}}}\in\Sn_{j}, \quad \forall i_{k_s}\leq j-1$,
        \item $\sigma_2=a_{i_{k_1}-j-1}\cdots a_{i_{k_{n}}-j-1}\in\Sn_{n+2-j}, \quad \forall i_{k_s}> j-1$,
\end{itemize}
where $k_s\leq k_{s+1}$, $m=\inv(\sigma_1)$ and $n=\inv(\sigma_2)$.

    \item For $j+\frac{3}{4}$, the resulting permutations are:
    \begin{itemize}
\renewcommand\labelitemi{$\bullet$}
    \item $\sigma_1=a_{i_{k_1}}\cdots a_{i_{k_{m}}}\in\Sn_{j+1}, \quad \forall i_{k_s}\leq j,$
        \item $\sigma_2=a_{i_{k_1}-j}\cdots a_{i_{k_{n}}-j}\in\Sn_{n+1-j}, \quad \forall i_{k_s}> j$,
\end{itemize}
where $k_s\leq k_{s+1}$, $m=\inv(\sigma_1)$ and $n=\inv(\sigma_2)$.

\end{enumerate}
\end{Def}

\begin{Ex}\label{Ex split 2}
Let $\sigma=a_2a_1a_3a_2a_1a_5a_4\in\Sn_6$ be a reduced word. 
The inversion $a_3$ is a candidate for applying the split type 2. 
To begin, we mark the red line in Figure \ref{inv7sigma51-sum}, where the split type 2 is performed at the height $3 + \frac{3}{4}$.

    \begin{figure}[H]
    \centering
    \includegraphics[scale=0.08]{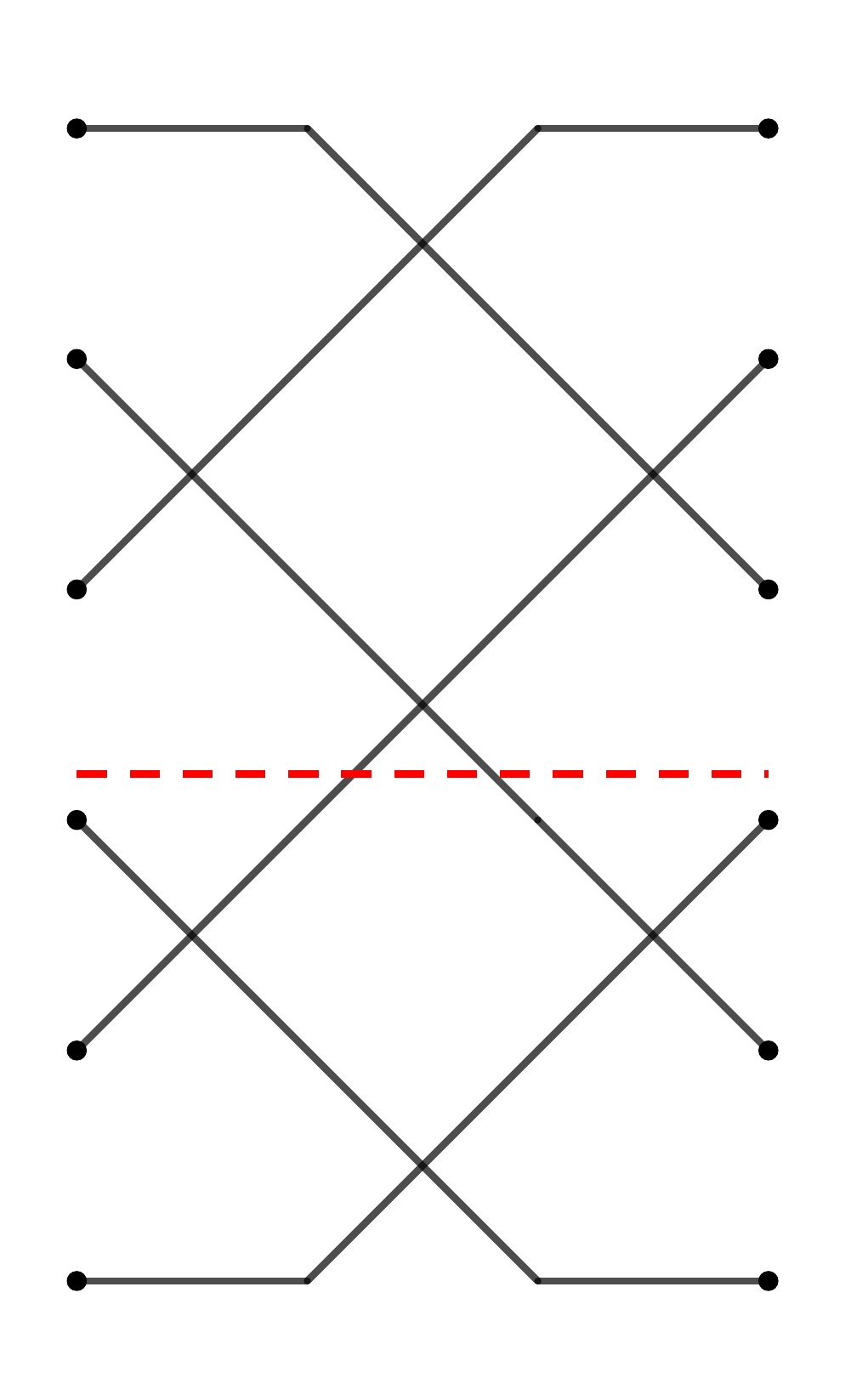}    
    \caption{First step to apply the split type 2 on $\sigma=a_2a_1a_4a_3a_2a_5a_4\in\Sn_6$.}
    \label{inv7sigma51-sum}
\end{figure}

Next, we connect the wires to form the resulting diagrams, as shown in Figure \ref{inv7sigma51-sum1}. Note that $a_2a_1a_3a_2\in\Sn_4$ and $a_1a_2a_1\in\Sn_3$ are still reduced. 

    \begin{figure}[H]
    \centering
    \includegraphics[scale=0.08]{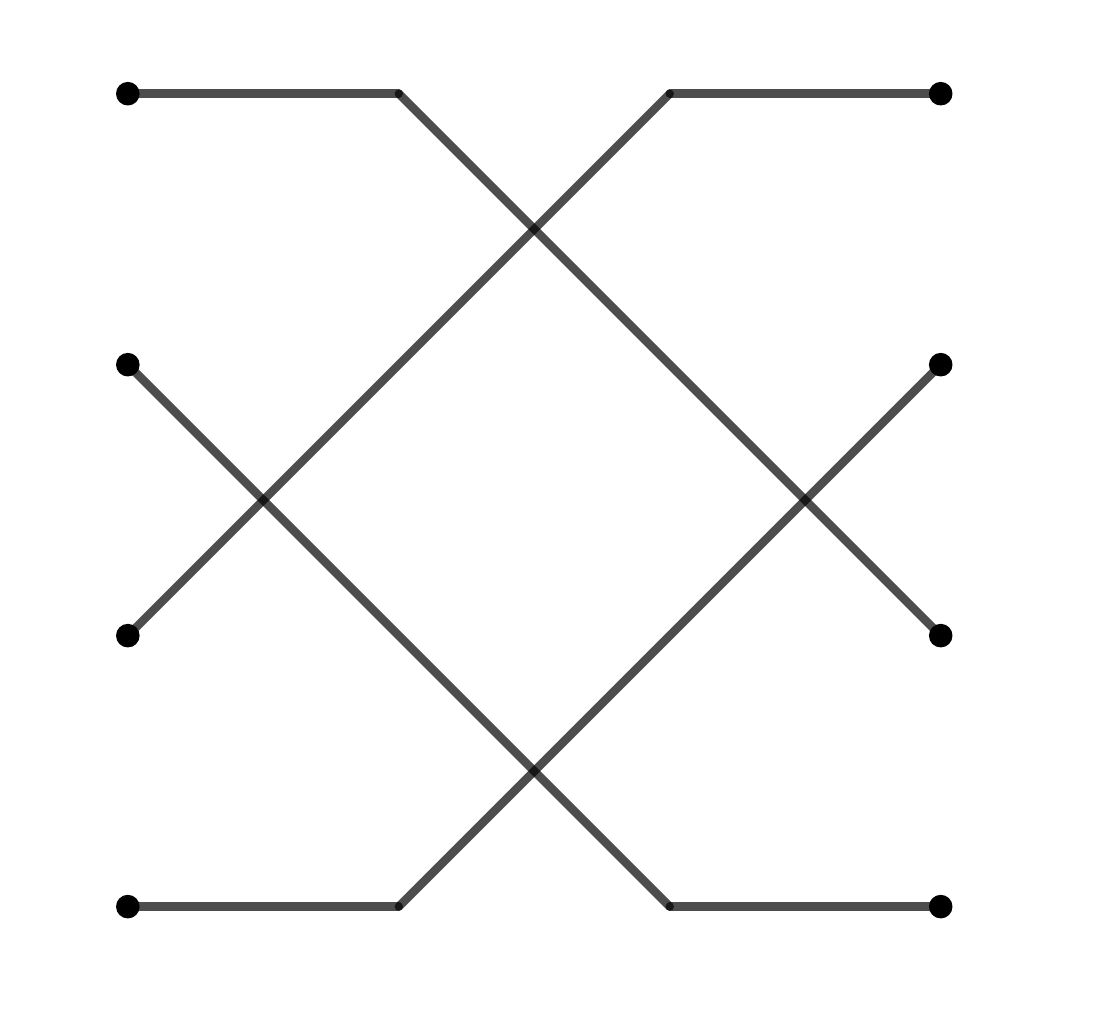}\quad   
    \includegraphics[scale=0.08]{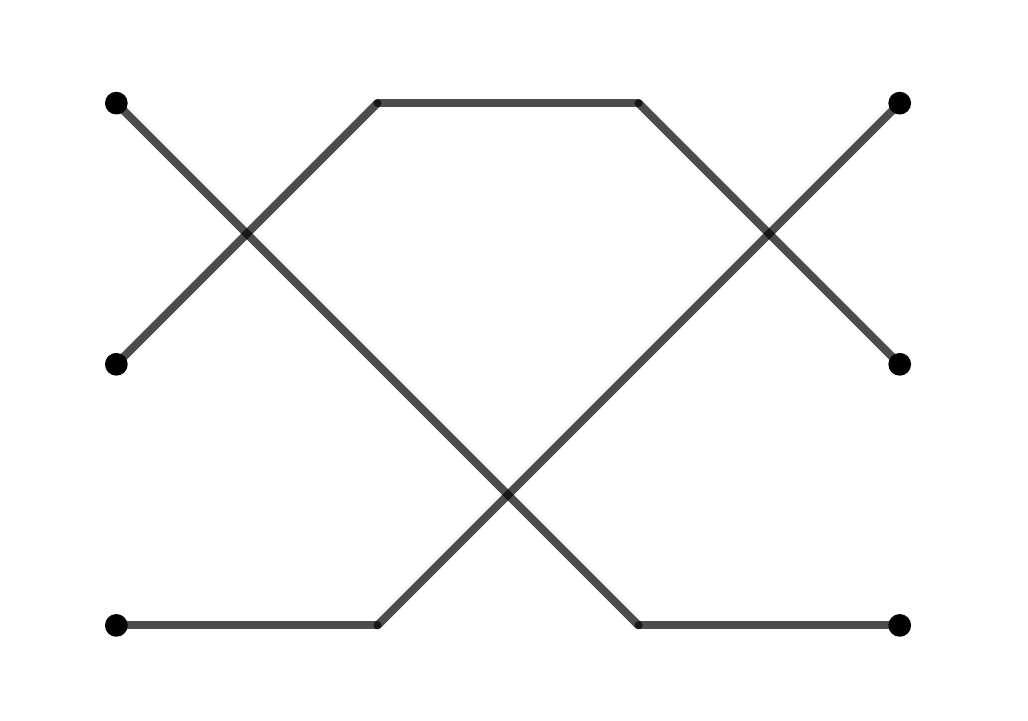}
    \caption{Resulting permutations: $\sigma_1=a_2a_1a_3a_2\in\Sn_4$ and $\sigma_2=a_1a_2a_1\in\Sn_3$.}
    \label{inv7sigma51-sum1}
\end{figure}

We can also apply split type 2 at the tourists $a_1, a_3,$ and $a_5$. 
\end{Ex}

\begin{Lem}\label{Lema split 2}
Let $\sigma = a_{i_1}\cdots a_{i_l}\in\Sn_{n+1}$ be a reduced word.
If $a_{i_k}$ is a tourist, then $\Blc_{\sigma}=\Blc_{\sigma_1}\times \Blc_{\sigma_2}$, 
where $\sigma_1\in\Sn_{{i_k}+1}$ and $\sigma_2\in\Sn_{{n+1}-{i_k}}$ 
are the remaining permutations obtained by performing a split type 2 at $a_{i_k}$.
\end{Lem}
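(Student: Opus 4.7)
The plan is to mirror the argument for Lemma \ref{Lema split 1}, establishing a bijective correspondence between the cellular data of $\Blc_{\sigma}$ and that of $\Blc_{\sigma_1}\times\Blc_{\sigma_2}$. The main additional ingredient needed is to verify that the presence of the tourist $a_{i_k}$ does not create any interaction between the two sides of the split.

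First I would observe that in any preancestry of $\sigma$, the tourist crossing is necessarily unmarked, i.e.\ it contributes $\varepsilon_0(k)=0$, and hence in any ancestry it carries a disk sign $\pm 1$. Indeed, every marked crossing (a diamond $\pm 2$) occurs as part of a paired configuration requiring at least two crossings sharing the same row. Since condition~(1) in the definition of a tourist asserts that $a_{i_k}$ is the unique crossing at row $i_k$, no diamond can occupy the tourist position. Its sign is therefore a free binary choice, independent of everything else.

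Next I would show that every region of the wiring diagram of $\sigma$ lies entirely in the subdiagram of $\sigma_1$ or entirely in that of $\sigma_2$. Recall that a region is determined by two defining vertices on a common row together with middle vertices on adjacent rows. Because the tourist is the only vertex on row $i_k$, no region can have row $i_k$ as its defining row; at most, the tourist may appear as a middle vertex of a region whose defining row is $i_k\pm 1$, and in that case the whole region lives on the side of the split containing the tourist. Consequently, every click acts either on a region of $\sigma_1$'s subdiagram (possibly flipping the tourist sign, when the tourist lies in $\sigma_1$) or on a region of $\sigma_2$'s subdiagram, and no click couples the two sides.

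With these two observations in hand, the bijection of cellular data proceeds exactly as in Lemma \ref{Lema split 1}. A preancestry of $\sigma$ restricts to preancestries of $\sigma_1$ and $\sigma_2$, since diamonds are always supported on single rows and no row straddles the split; conversely any pair of preancestries assembles into one for $\sigma$ by inserting the (unmarked) tourist crossing. Disk signs on the inversions of $\sigma_1$ and $\sigma_2$, together with the free $\pm 1$ sign at the tourist, determine the ancestry of $\sigma$. The partial order $\preceq$ on ancestries decomposes as a product order, matching the product CW structure. Invoking Fact \ref{theo2} on both sides yields $\Blc_{\sigma}=\Blc_{\sigma_1}\times\Blc_{\sigma_2}$.

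The main obstacle will be giving a clean justification that regions truly cannot straddle the tourist row; once that bottleneck property is nailed down, the rest is a direct transcription of the bijection and incidence-preservation argument used in the proof of Lemma \ref{Lema split 1}.
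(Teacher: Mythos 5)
Your overall route is the same as the paper's (the tourist is never marked, so preancestries of $\sigma$ factor into pairs for $\sigma_1$ and $\sigma_2$, and one reduces to the direct sum and Lemma \ref{blocklemma}), but the step you yourself flag as the bottleneck is genuinely false, and this is a real gap. It is not true that every region lies entirely on one side of the split, nor that a region having the tourist as a middle vertex lies on the side containing the tourist. The paper's own running example shows this: for $\sigma=a_2a_1a_4a_3a_2a_5a_4\in\Sn_6$ with tourist $a_3$, the diagram has exactly two regions (Figure \ref{region}), one with defining row $r_2$ and one with defining row $r_4$, and the tourist is a boundary (middle) vertex of \emph{both}; Figure \ref{click} even exhibits a click in the upper region that flips the tourist's sign. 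Splitting at height $i_k+\tfrac34$ puts the tourist in $\sigma_1$ while the $r_4$-region straddles the cut, and splitting at $i_k+\tfrac14$ just trades this for the $r_2$-region. Consequently clicks performed in regions belonging to the "other" factor do change the tourist's sign, so your sign-by-sign identification of the $1$-skeleton of $\Blc_{\sigma}$ with that of $\Blc_{\sigma_1}\times\Blc_{\sigma_2}$ breaks down as stated. The correct mechanism, which is what the paper's proof (and Remark \ref{remark-tourist}) relies on, is weaker: the tourist never carries a diamond and never enables or blocks a click — it is "only affected" by clicks — so its sign is a passive coordinate whose flipping along with a click on either side still gives an isomorphism of the cellular data with that of $\sigma_1\oplus\sigma_2$; your argument needs to be rephrased to absorb this coupling rather than to deny it.

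A smaller imprecision: your reason why the tourist cannot be marked ("every marked crossing occurs in a paired configuration on the same row") is only verified in the paper for dimensions $1$ and $2$ and is not the clean general argument. What does work in general is that the subword of marked letters has value $1$ in $\Sn_{n+1}$, while a word containing the generator $a_{i_k}$ exactly once cannot be the identity (it moves the set $\{1,\dots,i_k\}$); since the tourist is the unique crossing at its height, it can never be marked. With that fix, and with the coupling issue above handled as in the paper, the rest of your transcription of the Lemma \ref{Lema split 1} argument goes through.
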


The proof is similar to the proof of Lemma \ref{Lema split 1}.

\begin{proof}
Since $a_{i_k}$ is a tourist, there are no preancestries for $\sigma\in\Sn_{n+1}$ with the inversion $a_{i_k}$ marked. 
By applying split type 2, the permutation is decomposed into two parts such that row $r_{i_k}$ contains only the inversion $a_{i_k}$. Consequently, a preancestry for the original permutation corresponds to a pair of preancestries for $\sigma_1$ and $\sigma_2$.
This implies that the CW complex of $\sigma\in\Sn_{n+1}$ is equivalent to that of $\sigma_1\oplus\sigma_2\in\Sn_{n+2}$. 
By Lemma \ref{blocklemma}, we have $\Blc_{\sigma}=\Blc_{\sigma_1}\times \Blc_{\sigma_2}$.
\end{proof}

\subsection{Split Type 3}

Finally, we introduce the split type 3, which differs from the previous cases in that the decomposition is not a simple cut and its application produces an additional inversion.

\begin{Def}
Consider a wiring diagram where we trace a curve starting in $r_i$ at height $i+\frac{1}{2}$, that passes from $r_i$ to $r_{i-1}$ without crossing any wire. 
The curve then crosses a wire at height $i-\epsilon$, and moves up to height $i-\frac{1}{4}$. 
The curve moves horizontally at this height and then moves down, crossing another wire at height $i-\epsilon$. 
The curve then moves into $r_i$ and continues at height $i+\frac{1}{2}$ until the end. In the process the curve crosses wires exactly twice. 
We assume there are only two crossings in $r_{i-1}$. 
If such a curve can be traced in the wiring diagram, we can perform \textit{split type 3}. 
The operation decomposes the diagram into two parts, resulting in the permutations $\sigma_1\in\Sn_{i+1}$ and $\sigma_2\in\Sn_{n+2-i}$.
In the crossed region, the wires are first reconnected by linking the left wire to the right dot, and vice versa, creating an inversion $a_i$. 
The remaining wires are then connected by joining them at their nearest starting and ending points.
\end{Def}

\begin{Def}
Let $\sigma = a_{i_1} \dots a_{i_l}$ be a reduced word for a permutation $\sigma \in~\Sn_{n+1}$. If a split type 3 can be performed at $r_j$, then:
\begin{itemize}  \renewcommand\labelitemi{$\bullet$}
    \item $\sigma_1=a_{i_{k_1}}\cdots a_{i_{k_{m}}}\in\Sn_{j+1}, \quad \forall i_{k_s}\leq j$,
    \item $\sigma_2=a_{i_{k_1}-j-1}\cdots a_{i_{k_{n}}-j-1}\in\Sn_{n+2-j}, \quad \forall i_{k_s}\geq j$,
\end{itemize}
where $k_s\leq k_{s+1}$, $m=\inv(\sigma_1)$ and $n=\inv(\sigma_2)$. In $\sigma_1$, the subword $a_j\cdots a_j$ will be represented by a single $a_j$, which is the new inversion introduced by the split.
\end{Def}

\begin{Remark}
In split types 1 and 2, the permutation is decomposed into two smaller permutations whose dimensions sum to $n+2$.
In split type 3, however, one additional inversion is generated in $\sigma_1$. Here, the sum of the dimensions of $\sigma_1$ and $\sigma_2$ is $n+3$. The sign of the additional inversion does not alter the homotopy type of the associated CW complex. Simply taking the direct sum would result in twice as many components, so this must be adjusted accordingly.
\end{Remark}

\begin{Ex}
Let $\sigma=a_1a_2a_3a_2a_4a_3a_2a_5a_4a_3a_2a_1\in\Sn_6$.
In Figure \ref{directsum2}, we trace a red curve that only crosses one region in the diagram, in accordance with the conditions outlined in the definition.
\begin{figure}[H]
    \centering  
    \includegraphics[scale=1.1]{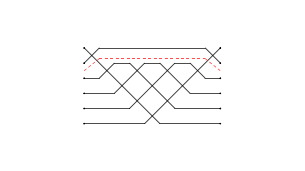}
    \caption{First step to perform a split type 3 on the diagram of $\sigma\in\Sn_6.$}
    \label{directsum2}
\end{figure}

Now, we connect the wires that we cut in the upper part of the diagram
to the dots representing 3 on both sides, 
creating an inversion $a_3$ in the diagram. 
The resulting permutation is $\eta \in \Sn_3$.

After that, we connect the wires that we cut in the lower
part of the wiring diagram to the dots representing 1 on both sides.
The resulting permutation is $\eta\in\Sn_5$.
\end{Ex}

\begin{figure}[H]
    \centering  
    \includegraphics[scale=0.9]{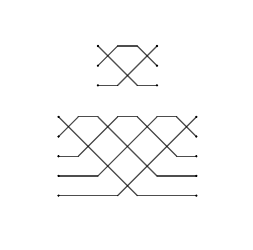}
    \caption{Resulting permutations $\eta\in\Sn_3$ and $\eta\in\Sn_5.$}
    \label{decom}
\end{figure}

\begin{Remark}\label{Afirm2}
Let $\sigma = a_{i_1} \cdots a_{i_l} \in S_{n+1}$ be a reduced word, and let $\varepsilon_{0}$ be a preancestry for $\sigma$. Suppose that there exist indices $k_1 < k_2$ such that $\varepsilon_{0}(k_i) =~-2$, $i\in\{1,2\}$. If $|i_{k_1} - i_{k_2}| = 1$, and exist a unique $k$ such that $i_k=i_{k_1}$ then, for all $k$, the sequences
\[
\varepsilon_1(k) =
\begin{cases}
    0, & i_{k} \leq i_{k_1},\\
    \varepsilon_0(k), & i_{k} > i_{k_1},
\end{cases}
\quad \text{and} \quad
\varepsilon_2 (k)=
\begin{cases}
    0, & i_{k} \geq i_{k_2},\\
    \varepsilon_0(k), & i_{k} < i_{k_2},
\end{cases}
\]
are also preancestries for $\sigma$. This follows easily from the definition of a preancestry and we omit the details of the proof. This remark will be used to prove the next lemma.
\end{Remark}

\begin{Lem} \label{Lemma split 3}
Let $\sigma = a_{i_1}\cdots a_{i_l}\in\Sn_{n+1}$ be a reduced word. 
If a split type 3 can be performed at 
$\sigma\in\Sn_{n+1}$, resulting in $\sigma_1\in\Sn_{j+2}$ 
and $\sigma_2\in\Sn_{n+1-j}$, 
then $\Blc_{\sigma}\times\{\pm1\}=\Blc_{\sigma_1}\times \Blc_{\sigma_2}$. In particular, the number of connected components in $\Blc_{\sigma}$ 
is half the product of the number of connected components 
in $\Blc_{\sigma_1}$ and $\Blc_{\sigma_2}.$
\end{Lem}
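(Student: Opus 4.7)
My plan is to mimic the proofs of Lemmas \ref{Lema split 1} and \ref{Lema split 2}: I will set up a correspondence between the preancestries (and ancestries) that index the cells of $\Blc_{\sigma}$ and those that index the cells of $\Blc_{\sigma_1} \times \Blc_{\sigma_2}$, and then invoke Lemma \ref{blocklemma} to close the argument. The essential new feature compared to the earlier lemmas is the extra inversion $a_j$ that split type 3 inserts into $\sigma_1$; identifying its role is the main point.

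The first step would be to exhibit the natural bijection between the inversions of $\sigma$ and those of $(\Inv(\sigma_1) \setminus \{a_j\}) \sqcup \Inv(\sigma_2)$, implicit in the construction of the split. Using this bijection, any preancestry of $\sigma$ lifts uniquely to a pair $(\varepsilon_0^{(1)}, \varepsilon_0^{(2)})$ of preancestries with the new inversion in $\sigma_1$ left unmarked. The key combinatorial observation is that the new $a_j$ is the unique crossing on its row in the wiring diagram of $\sigma_1$, so it cannot be marked in any preancestry: neither a type I nor a type II configuration can include such an isolated crossing as one of its marked vertices. This lets me promote the bijection to one between ancestries, provided we decorate each pair $(\varepsilon_1, \varepsilon_2)$ with an extra free sign $\pm 1$ for the new inversion in $\sigma_1$, which accounts for the factor $\{\pm 1\}$.

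Having matched cells and their dimensions, I would then verify that the upper-set partial order, and hence the gluing data of the two CW complexes, transports across this correspondence. The regions of $\sigma$ decompose as regions of $\sigma_1$ and of $\sigma_2$ meeting along the split site, and clicks in the two smaller diagrams correspond, possibly in coupled pairs, to clicks in $\sigma$. The extra sign is then carried along as a formal $\{\pm 1\}$ direction in the product. This yields $\Blc_{\sigma_1} \times \Blc_{\sigma_2} = \Blc_{\sigma} \times \{\pm 1\}$ as in the analogous statements for split types 1 and 2, and the claim on the number of components follows at once.

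The step I expect to be delicate is verifying that the free sign of the new inversion stays truly independent throughout the CW structure, including across the higher-dimensional gluing maps. Clicks in $\sigma_1$ whose boundary includes the new inversion will flip its sign, but this flipping must be shown to be invisible on the $\sigma$-side, so that the product $\Blc_{\sigma} \times \{\pm 1\}$ is recovered literally rather than only up to some twist; a case analysis paralleling the treatment of type I and type II preancestries seems to be what is needed.
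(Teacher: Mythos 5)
Your proposal is correct and follows essentially the same route as the paper's proof: match the inversions and preancestries of $\sigma$ with pairs for $\sigma_1$ and $\sigma_2$ in which the new inversion $a_j$ is never marked, treat its sign as the free $\{\pm 1\}$ factor produced by the click in the adjacent region, and reduce to Lemma \ref{blocklemma}, exactly as in the split type 1 and 2 arguments. The one caveat is that your justification that the isolated crossing can never carry a diamond appeals to the type I/II classification, which only covers preancestries of dimension 2; for arbitrary dimension one should instead argue that the marked subword has value 1, and a word equal to the identity cannot contain a generator exactly once (that would place $a_j$ in the parabolic subgroup generated by the remaining generators) --- a point the paper itself treats no less tersely.
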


\begin{proof}
Applying split type 3 the permutation is decomposed into two parts: the upper part, which includes the new inversion $a_j$ and is represented by $\sigma_1 \in \Sn_{j+2}$, and the lower part. 
Performing a click operation in the region corresponding to $\sigma_1$ changes the signs of all inversions in $r_j$ simultaneously. 
Consequently, this change of signs does not affect the possibility of performing a click in the regions corresponding to $\sigma_2$.
This establishes a correspondence between 1-skeletons of the desired CW-complexes. 

The curve passes through exactly one region which is contained between the only two crossings in $r_{i-1}$. Thus, any preancestry with squares in $r_{i-1}$ has only one possible way to be marked in this row. 
By Remark \ref{Afirm2}, we have a correspondence to higher dimensional cells. Therefore, if $\varepsilon_0$ is a preancestry for $\sigma$ such that $r_{i-1}=k_1$, then $\varepsilon_1$ and $\varepsilon_2$ correspond to a pair of preancestries in $\sigma_1$ and $\sigma_2$, respectively. 

If no click is performed in the region corresponding to $\sigma_1$, the sign of $a_j$ in $\sigma_1$ is $\circ$ (or $\sbc$); if a click is performed, the sign changes to $\sbc$ (or $\circ$). This results in two copies of the same component.

Therefore, $\Blc_{\sigma}\times\{\pm1\}=\Blc_{\sigma_0}$, where $\sigma_0=\sigma_1\oplus\sigma_2\in\Sn_{n+3}$. By Lemma \ref{blocklemma}, it follows that $\Blc_{\sigma_0} = \Blc_{\sigma_1} \times \Blc_{\sigma_2}$. Thus, the number of connected components of $\Blc_{\sigma}$ is half the product of the number of connected components of $\Blc_{\sigma_1}$ and $\Blc_{\sigma_2}$.
\end{proof}

\begin{Ex}
Let $\sigma=a_1a_2a_3a_2a_4a_3a_2a_5a_4a_3a_2a_1\in\Sn_6$, as in the previous example.
In the diagram of $\sigma_1=a_1a_2a_1\in\Sn_3$ (Figure \ref{decom}),
the inversion $a_2$ is generated during the process of separating the diagrams. However, this inversion does not affect the overall analysis. Its effect is limited to changing the signs of
$r_1$ in the diagram of $\sigma_2=a_1a_2a_1a_3a_2a_1a_4a_3a_2a_1\in\Sn_5$.

The CW complex is formed as the product of $\Blc_{\sigma_2}$ 
and the cells of $\Blc_{\sigma_1}$ disregarding the influence of $a_2$. 
In this example, the cells correspond to two dots and one segment, represented as $(\sbc\, x\, \sbc), (\circ\, x\, \circ),$ and $(\sbd\, x\, \sd)$. 
Assigning either $\circ$ or $\sbc$ to the position $x$ yields the same CW complex. 

Therefore, only one possibility needs to be considered. Consequently, the number of connected components is half the product of the components.

From this analysis, it follows that $\Blc_{\sigma}$ 
contains $3\times 52=156$ connected components, all of which are contractible.
\end{Ex}

In the following sections, we examine the homotopy type of $\Bl_{\sigma}$ with $\sigma\in\Sn_6$ categorizing the analysis by the number of inversions.
With our understanding of how to decompose a wiring diagram, we can now distinguish between permutations that can be reduced and those that cannot.
\section{The Homotopy Type of $\Bl_{\sigma}$ for $\inv(\sigma)\leq 8$}\label{inv6}

The decomposition techniques introduced in this work, particularly the split operations, provide a straightforward method to determine the homotopy type of $\sigma \in \Sn_6$, with $\inv(\sigma) \leq 8$.

Here and in the following sections, we present information on the homotopy type by listing permutations according to the presence of blocks, and indicating the cases in which a particular type of split can be applied. When no type of split can be applied, we proceed to construct the components.

\subsection{Permutations in $\Sn_6$ with $\inv(\sigma)=4$}

Permutations $\sigma\in\Sn_6$ with $\inv(\sigma)\leq 4$ have a non-trivial block structure, that is, $\block(\sigma) \neq 0$. This allows them to be expressed as a sum of well-known permutations.

If a permutation $\sigma$ decomposes as a direct sum $\sigma=\sigma_1\oplus\sigma_2$, then the corresponding set $\Bl_{\sigma}$ also decomposes as $\Bl_{\sigma}=\Bl_{\sigma_1}\oplus\Bl_{\sigma_2}$. Since $\sigma_1\in\Sn_j$ and $\sigma_2\in\Sn_{6-j}$ with $j\leq5$, both $\Bl_{\sigma_1}$ and $\Bl_{\sigma_2}$ are contractible. Consequently, the sum $\Bl_{\sigma}=\Bl_{\sigma_1}\oplus\Bl_{\sigma_2}$ is also contractible.
This decomposition also implies that the number of connected components of $\Bl_{\sigma}$ is the product of the number of connected components of $\Bl_{\sigma_1}$ and $\Bl_{\sigma_2}$.

We will now categorize the permutations according to their number of blocks. For any permutation $\sigma\in\Sn_{n+1}$ with $b\neq0$, the homotopy type of $\Bl_{\sigma}$ is trivial.

\subsection{Permutations in $\Sn_6$ with $\inv(\sigma)=5$} 
For $\inv(\sigma)=5$ there are 71 permutations distributed across the following cases:

\begin{enumerate}
\item There are 55 permutations with $b\neq 0$;

\item There are 16 permutations with $b=0$.  

In this case, we can apply split type 1. Consequently, $\Bl_{\sigma}$ is contractible.
    
\end{enumerate}

Since $\Bl_{\sigma}$ is contractible for both $b=0$ and $b\neq 0$, it follows that $\Bl_{\sigma}$ is contractible for all $\sigma \in \Sn_6$ with $\inv(\sigma)= 5$.

\subsection{Permutations in $\Sn_6$ with $\inv(\sigma)=6$}
For $\inv(\sigma)=6$, there are 90 permutations distributed across the following cases:

\begin{enumerate}
\item There are 46 permutations with $b\neq0$;

\item There are 44 permutations with $b=0$;

In this case, we can apply split type 1. Consequently, $\Bl_{\sigma}$ is contractible.

\end{enumerate}

As a result, for all $\sigma \in \Sn_6$ with $\inv(\sigma)=6$, $\Bl_{\sigma}$ is contractible.

\subsection{Permutations in $\Sn_6$ with $\inv(\sigma)=7$}
For $\inv(\sigma)=7$, there are 101 permutations distributed across the following cases:

\begin{enumerate}
    \item There are 32 permutations with $b\neq 0$; 

    \item There are 68 permutations that we can apply split type 1. Consequently, $\Bl_{\sigma}$ is contractible.

\item For the permutation $\sigma=a_2a_1a_4a_3a_2a_5a_4$ we can apply split type 3. Therefore, $\Bl_{\sigma}$ is contractible.

\end{enumerate}

As a result, for all $\sigma \in \Sn_6$ with $\inv(\sigma)=7$, $\Bl_{\sigma}$ is contractible.

\subsection{Permutations in $\Sn_6$ with $\inv(\sigma)=8$} 
For $\inv(\sigma)=8$, there are 101 permutations distributed across the following cases: 

\begin{enumerate}
    \item There are 18 permutations with $b\neq 0$;    
    \item There are 56 permutations that we can apply split type 1;
    
    \item There are 18 permutations that we can apply split type 1 or 2;

    \item There are 9 permutations that we can apply split type 2;
\end{enumerate}

Therefore, for all cases, $\Bl_{\sigma}$ is contractible.
In conclusion, $\Bl_{\sigma}$ is contractible for all $\sigma \in \Sn_6$ with $\inv(\sigma) \leq 8.$

\section{The Homotopy Type of $\Bl_{\sigma}$ for $9\leq\inv(\sigma)\leq12$}\label{inv12}

For permutations $\sigma\in\Sn_6$ with $9\leq\inv(\sigma)\leq12$, the split techniques apply to some cases, while others require individual analysis. 
For the latter, we produced explicit drawings and examined their connected components directly. 
We present one representative case here; for the remaining diagrams, see \cite{leal2025homotopy}.

\subsection{Permutations in $\Sn_6$ with $\inv(\sigma)=9$}
For $\inv(\sigma)=9$, we have 90 permutations distributed in the following cases: 

\begin{enumerate}
    \item There are 8 permutations with $b\neq 0$;
    
    \item There are 40 permutations that we can apply split type 1;
    
    \item There are 15 permutations that we can apply split type 2;
    
    \item There are 19 permutations that we can apply split type 3;
    
    \item There are 8 permutations that need to be studied separately.
\end{enumerate}

\subsubsection{Case 5}\label{case5}

 For $\sigma=[651234]=a_4a_3a_2a_1a_5a_4a_3a_2a_1\in\Sn_6$, it follows that
    \begin{equation*}
    \begin{aligned}
\acute{\sigma}&=\frac{1}{4\sqrt{2}}(-1-\hat{a}_1-\hat{a}_2-\hat{a}_1\hat{a}_2-\hat{a}_3+\hat{a}_1\hat{a}_3-\hat{a}_2\hat{a}_3+\hat{a}_1\hat{a}_2\hat{a}_3-\hat{a}_4+\hat{a}_1\hat{a}_4\\ &+\hat{a}_2\hat{a}_4-\hat{a}_1\hat{a}_2\hat{a}_4-\hat{a}_3\hat{a}_4-\hat{a}_1\hat{a}_3\hat{a}_4\hat{a}_2\hat{a}_3\hat{a}_4+\hat{a}_1\hat{a}_2\hat{a}_3\hat{a}_4-\hat{a}_5+\hat{a}_1\hat{a}_5+\hat{a}_2\hat{a}_5\\ & -\hat{a}_1\hat{a}_2\hat{a}_5+\hat{a}_3\hat{a}_5+\hat{a}_1\hat{a}_3\hat{a}_5-\hat{a}_2\hat{a}_3\hat{a}_5-\hat{a}_1\hat{a}_2\hat{a}_3\hat{a}_5-\hat{a}_4\hat{a}_5-\hat{a}_1\hat{a}_4\hat{a}_5-\hat{a}_2\hat{a}_4\hat{a}_5\\ &-\hat{a}_1\hat{a}_2\hat{a}_4\hat{a}_5+\hat{a}_3\hat{a}_4\hat{a}_5-\hat{a}_1\hat{a}_3\hat{a}_4\hat{a}_5+\hat{a}_2\hat{a}_3\hat{a}_4\hat{a}_5-\hat{a}_1\hat{a}_2\hat{a}_3\hat{a}_4\hat{a}_5).      
    \end{aligned}    
    \end{equation*}
    
There exist $2^5=32$ thin ancestries. 
Consequently, $\Bl_{\sigma}$ has 32 thin connected components, all contractible.

For ancestries of dimension $1$, there are four possible positions for the squares:

\begin{enumerate}[label=(\roman*)]
    \item the rows containing $(\sbc,\circ)$ or $(\circ,\sbc)$ are consecutive, and there are exactly two such rows; or
    \item exactly one of the rows $r_1$ and $r_4$ contains $(\sbc,\circ)$ or $(\circ,\sbc)$, while all remaining rows have the same sign at every inversion.
\end{enumerate}

In each case, the resulting CW complex is the one shown in Figure~\ref{inv9sigma15-2}. Consequently, we obtain $32$ copies of this complex.
\begin{figure}[H]
   \centerline{
    \includegraphics[width=1.0\textwidth]{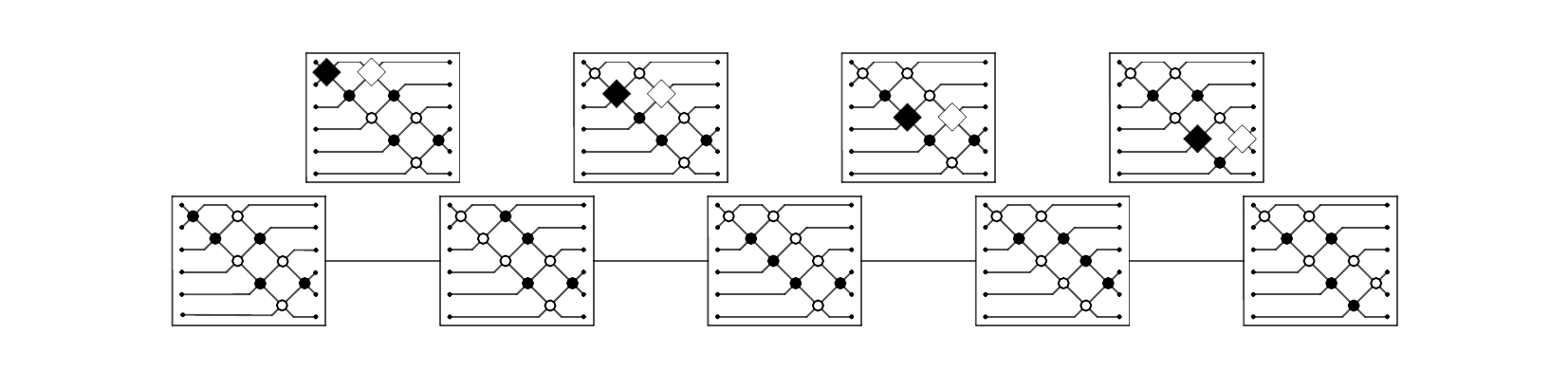}}
    \caption{CW complex of dimension 1 generated by $\varepsilon_1=(\sbd\sbc\sd\circ\sbc\sbc\circ\circ\sbc)$.}
    \label{inv9sigma15-2}
\end{figure}

Therefore, $\Bl_{\sigma}$ has a total of 32 connected components of this type, all contractible.

The remaining ancestries of dimension 1 appear in the 2-dimensional CW complex.

For dimension 2, there are three possible positions for the squares that will appear together. 
This yields 32 copies of the CW complex shown in Figure \ref{inv9sigma15-21}.

\begin{figure}[H]
   \centerline{
    \includegraphics[width=0.75\textwidth]{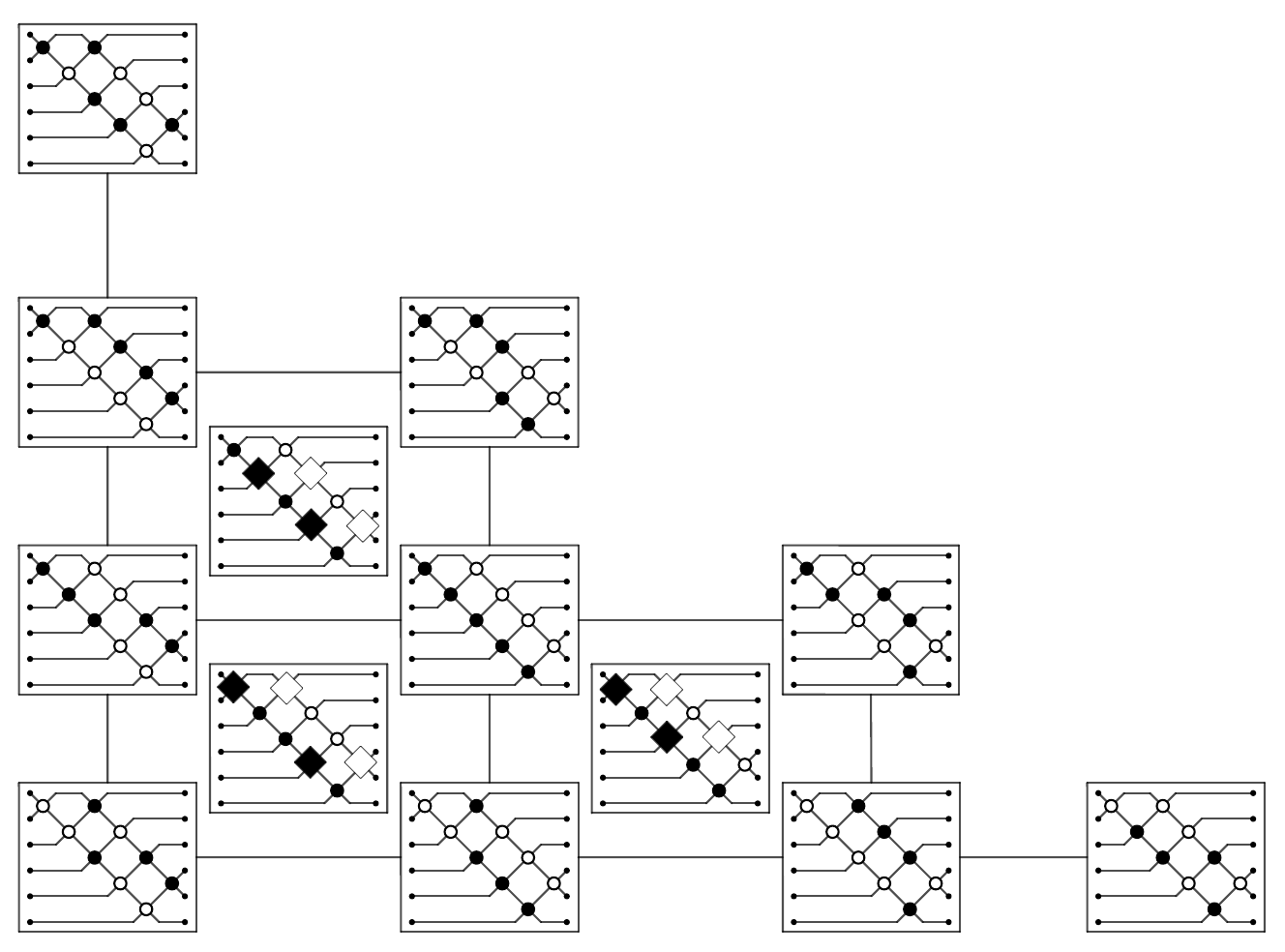}}
    \caption{CW complex of dimension 2 with ancestries $\varepsilon_2=(\sbd\sbc\sd\sbd\circ\sbc\sd\sbc\circ)$,\quad$\varepsilon_3=(\sbd\sbc\sd\sbc\circ\sbd\circ\sbc\sd)$ and $\varepsilon_4=(\sbc\circ\sbd\sbc\sd\sbd\circ\sbc\sd)$.}
    \label{inv9sigma15-21}
\end{figure}

Therefore, $\Bl_{\sigma}$ has a total of 32 connected components of this type, all contractible.
In summary, $\Bl_{\sigma}$ has 96 connected components, all of which are contractible.

As a result, $\Bl_{\sigma}$ is contractible for all $\sigma\in\Sn_6$ with $\inv(\sigma)=9$.

\subsection{Permutations in $\Sn_6$ with $\inv(\sigma)=10$} 
For $\inv(\sigma)=10$, we have 71 permutations distributed in the following cases:

\begin{enumerate}
    \item There are 2 permutations with $b\neq0$;
    \item There are 16 permutations that we can apply split type 1;
    \item There are 12 permutations that we can apply split type 2;

    \item There are 21 permutations that we can apply split type 3;
    
    \item There are 20 permutations that need to be studied separately.
\end{enumerate}

Following a method similar to the one in subsection \ref{case5}, we analyze the final case and find that all components are contractible. The illustrations supporting this analysis can be found in \cite{leal2025homotopy}.

As a result, $\Bl_{\sigma}$ is contractible for all $\sigma\in\Sn_6$ with $\inv(\sigma)=10$.

\subsection{Permutations in $\Sn_6$ with $\inv(\sigma)=11$}
For $\inv(\sigma)=11$, we have 49 permutations distributed in the following cases:

\begin{enumerate}
    \item There are 4 permutations whose analysis can be reduced to the case of the permutation $\sigma_1=a_1a_2a_1a_3a_2a_1a_4a_3a_2a_1\in\Sn_5$.  In these cases, we apply a split of type 1 or type 2.;
    
    \item There are 3 permutations that we can apply split type 2;

    \item There are 9 permutations that we can apply split type 3;

    \item There are 33 permutations that need to be studied separately.
\end{enumerate}

In the final case, we illustrate the components and show that they are all contractible. A visual representation of these components can be found in \cite{leal2025homotopy}.

Therefore, $\Bl_{\sigma}$ is contractible for all $\sigma\in\Sn_6$ with $\inv(\sigma)=11$.

\subsection{Permutations in $\Sn_6$ with $\inv(\sigma)=12$}
For $\inv(\sigma)=12$, except for $\sigma=[563412]$, there are 28 permutations distributed in two cases:

\begin{enumerate}
    \item There are 2 permutations that we can apply split type 3;
    \item There are 26 permutations that need to be studied separately.
\end{enumerate}

For the last case, we illustrate the components and conclude that all are contractible. For these drawings, refer to \cite{leal2025homotopy}.

We conclude that $\Bl_{\sigma}$ is contractible for all $\sigma\in\Sn_6$ with $\inv(\sigma)=12$, with the single exception of $\sigma=[563412]$.

\section{Permutation $\sigma=[563412]$}\label{noncontractible}

For $\sigma=[563412]= a_2a_1a_3a_2a_4a_3a_2a_1a_5a_4a_3a_2 \in \Sn_6$, it follows that 

\[
\acute{\sigma}=\frac{1}{2}(-\hat{a}_1-\hat{a}_2\hat{a}_3\hat{a}_4-\hat{a}_5+\hat{a}_1\hat{a}_2\hat{a}_3\hat{a}_4\hat{a}_5)\in\tilde{\Bn}^{+}_6.
\]

In the next section, we explore the orbits of the elements $z\in\acute{\sigma}\Qt_6$, as well as the count of cells of each dimension present in the component.

\subsection{The Orbits}

The set $\acute{\sigma}\Qt_6$ consists of nine orbits each of size 4 or 8:
\[
\mathcal{O}_{\acute{\sigma}}=\Bigl\{\frac{\pm\hat{a}_1\pm\hat{a}_2\hat{a}_3\hat{a}_4\pm\hat{a}_5\pm\hat{a}_1\hat{a}_2\hat{a}_3\hat{a}_4\hat{a}_5}{2}\Bigl\},
\]
\[
\mathcal{O}_{\hat{a}_1\acute{\sigma}}=\Bigl\{\frac{1\pm\hat{a}_1\hat{a}_2\hat{a}_3\hat{a}_4\pm\hat{a}_1\hat{a}_5\pm\hat{a}_2\hat{a}_3\hat{a}_4\hat{a}_5}{2}\Bigl\},
\]
\[
\mathcal{O}_{\hat{a}_2\acute{\sigma}}=\Bigl\{\frac{\pm\hat{a}_1\hat{a}_2\pm\hat{a}_3\hat{a}_4\pm\hat{a}_2\hat{a}_5\pm\hat{a}_1\hat{a}_3\hat{a}_4\hat{a}_5}{2}\Bigl\},
\]
\[
\mathcal{O}_{\hat{a}_1\hat{a}_2\acute{\sigma}}=\Bigl\{\frac{\pm\hat{a}_2\pm\hat{a}_1\hat{a}_3\hat{a}_4\pm\hat{a}_1\hat{a}_2\hat{a}_5\pm\hat{a}_3\hat{a}_4\hat{a}_5}{2}\Bigl\},
\]
\[
\mathcal{O}_{\hat{a}_3\acute{\sigma}}=\Bigl\{\frac{\pm\hat{a}_1\hat{a}_3\pm\hat{a}_2\hat{a}_4\pm\hat{a}_3\hat{a}_5\pm\hat{a}_1\hat{a}_2\hat{a}_4\hat{a}_5}{2}\Bigl\},
\]
\[
\mathcal{O}_{\hat{a}_1\hat{a}_3\acute{\sigma}}=\Bigl\{\frac{\pm\hat{a}_3\pm\hat{a}_1\hat{a}_2\hat{a}_4\pm\hat{a}_1\hat{a}_3\hat{a}_5\pm\hat{a}_2\hat{a}_4\hat{a}_5}{2}\Bigl\},
\]
\[
\mathcal{O}_{\hat{a}_2\hat{a}_3\acute{\sigma}}=\Bigl\{\frac{\pm\hat{a}_1\hat{a}_2\hat{a}_3\pm\hat{a}_4\pm\hat{a}_1\hat{a}_4\hat{a}_5\pm\hat{a}_2\hat{a}_3\hat{a}_5}{2}\Bigl\},
\]
\[
\mathcal{O}_{\hat{a}_4\acute{\sigma}}=\Bigl\{\frac{\pm\hat{a}_2\hat{a}_3\pm\hat{a}_1\hat{a}_4\pm\hat{a}_4\hat{a}_5\pm\hat{a}_1\hat{a}_2\hat{a}_3\hat{a}_5}{2}\Bigl\},
\]
\[
\mathcal{O}_{-\hat{a}_1\acute{\sigma}}=\Bigl\{\frac{-1\pm\hat{a}_1\hat{a}_2\hat{a}_3\hat{a}_4\pm\hat{a}_1\hat{a}_5\pm\hat{a}_2\hat{a}_3\hat{a}_4\hat{a}_5}{2}\Bigl\}.
\]

In the expressions within the Clifford algebra notation, 
the signs must be such that there is an odd number of equal signs.

The elements $z\in\acute{\sigma}\Qt_6$ have $\mathfrak{R}(z)\in\{-\frac{1}{2},0,\frac{1}{2}\}$.
Using the Formula \ref{sumdim0} of the number of ancestries of
dimension 0 for a given $z\in\acute{\sigma}\Qt_6$, it follows that $\Na(z)\in\{48,64,80\}$.
The number of ancestries per dimension can be determined using the Formulas \ref{sumneg} and \ref{sumpos}, and this can be cross-verified using Maple.

\begin{enumerate}
    \item For $z\in\mathcal{O}_{\acute{\sigma}}$, $\mathfrak{R}(z)=0$ and $\Na(z)=64$ and $\Na_{thin}(z)=4$. For each $z\in\mathcal{O}_{\acute{\sigma}}$, the sets $\Bl_{z}$ have four thin components and one thick. Consequently, $\Bl_{\sigma}$ has 40 connected components of this type.     

   The component has sixty 0-cells, one hundred and twelve 1-cells, sixty-eight 2-cells, sixteen 3-cells, and one 4-cell. Moreover, the Euler characteristic of this component is equal to 1.
    
    \item If $\mathfrak{R}(z)=\frac{1}{2}$, then $\Na(z)=80$ and $\Na_{thin}(z)=0$. For each $z\in\mathcal{O}_{\hat{a}_1\acute{\sigma}}$, the sets $\Bl_{z}$ have one connected component. Therefore, $\Bl_{\sigma}$ has 4 connected components of this type.

    The component has eighty 0-cells, one hundred and sixty-eight 1-cells, one hundred and twenty-eight 2-cells, forty-eight 3-cells, ten 4-cells, and one 5-cell. Additionally, the Euler characteristic of this component is 1.
    
    \item For $z\in\mathcal{O}_{\hat{a}_2\acute{\sigma}}$, $\mathfrak{R}(z)=0$ and $\Na(z)=64$ and $\Na_{thin}(z)=0$. For each $z\in\mathcal{O}_{\hat{a}_2\acute{\sigma}}$, the sets $\Bl_{z}$ have one connected component.
    Consequently, $\Bl_{\sigma}$ has 8 connected components of this type.
    
    The component has sixty-four 0-cells, one hundred and twelve 1-cells, sixty 2-cells, twelve 3-cells, and one 4-cell. In this case, the Euler characteristic is also equal to 1.
    
    \item For $z\in\mathcal{O}_{\hat{a}_1\hat{a}_2\acute{\sigma}}$, $\mathfrak{R}(z)=0$ and $\Na(z)=64$ and $\Na_{thin}(z)=0$. For each $z\in\mathcal{O}_{\hat{a}_1\hat{a}_2\acute{\sigma}}$, the sets $\Bl_{z}$ have one connected component. Therefore, $\Bl_{\sigma}$ has 8 connected components of this type. 
    
    The component has sixty-four 0-cells, one hundred and twelve 1-cells, sixty 2-cells, twelve 3-cells, and one 4-cell. Furthermore, this component has an Euler characteristic of 1.
    
    \item For $z\in\mathcal{O}_{\hat{a}_3\acute{\sigma}}$, $\mathfrak{R}(z)=0$ and $\Na(z)=64$ and $\Na_{thin}(z)=0$. For each $z\in\mathcal{O}_{\hat{a}_3\acute{\sigma}}$, the sets $\Bl_{z}$ have one connected component. Hence, $\Bl_{\sigma}$ has 8 connected components of this type.

The component has sixty-four 0-cells, one hundred and twelve 1-cells, fifty-two 2-cells, and four 3-cells. Moreover, the Euler characteristic of this component is 0.
    
    \item For $z\in\mathcal{O}_{\hat{a}_1\hat{a}_3\acute{\sigma}}$, $\mathfrak{R}(z)=0$ and $\Na(z)=64$ and $\Na_{thin}(z)=0$. For each $z\in\mathcal{O}_{\hat{a}_1\hat{a}_3\acute{\sigma}}$, the sets $\Bl_{z}$ have one connected component. Therefore, $\Bl_{\sigma}$ has 8 connected components of this type.

The component has sixty-four 0-cells, one hundred and twelve 1-cells, fifty-two 2-cells, and four 3-cells. In addition, the Euler characteristic of this component equals 0.

    \item For $z\in\mathcal{O}_{\hat{a}_2\hat{a}_3\acute{\sigma}}$, $\mathfrak{R}(z)=0$ and $\Na(z)=64$ and $\Na_{thin}(z)=0$. For each $z\in\mathcal{O}_{\hat{a}_2\hat{a}_3\acute{\sigma}}$, the sets $\Bl_{z}$ have one connected component. Consequently, $\Bl_{\sigma}$ has 8 connected components of this type.

    The component has sixty-four 0-cells, one hundred and twelve 1-cells, sixty 2-cells, twelve 3-cells and one 4-cell. Moreover, the Euler characteristic of this component is equal to 1.
    
    \item For $z\in\mathcal{O}_{\hat{a}_4\acute{\sigma}}$, $\mathfrak{R}(z)=0$ and $\Na(z)=64$ and $\Na_{thin}(z)=0$. For each $z\in\mathcal{O}_{\hat{a}_4\acute{\sigma}}$, the sets $\Bl_{z}$ have one connected component. Therefore, $\Bl_{\sigma}$ has 8 connected components of this type. 

    The component has sixty-four 0-cells, one hundred and twelve 1-cells, sixty 2-cells, twelve 3-cells and one 4-cell. Furthermore, this component has an Euler characteristic of 1.
    
    \item If $\mathfrak{R}(z)=-\frac{1}{2}$, then $\Na(z)=48$ and $\Na_{thin}(z)=0$. For each $z\in\mathcal{O}_{-\hat{a}_1\acute{\sigma}}$, the sets $\Bl_{z}$ have two connected components. Hence, $\Bl_{\sigma}$ has 8 connected components of this type. 

    The component has twenty-four 0-cells, twenty-eight 1-cells and four 2-cells. Furthemore, the Euler characteristic of this component is 0.
    
\end{enumerate}

\subsection{The Known Component}\label{dim2}

In \cite{alves2022onthehomotopy}, Alves and Saldanha presented a connected component of $\Bl_{\sigma}$ that is homotopically equivalent to $\mathbb{S}^1$ and thus non-contractible. In this section, we will briefly recall this component, which corresponds to the orbit $\mathcal{O}_{-\hat{a}_1\acute{\sigma}}$.

If $r_4$ has opposite signs and the remaining rows have equal signs, we obtain the component shown in Figure \ref{inv12sigma12-2} and corresponds to the CW complex depicted in Figure \ref{inv12sigma12-2cw}.
Therefore, $\Bl_{\sigma}$ has 8 connected components of this type.

\begin{figure}[H]
   \centerline{    
   \includegraphics[width=0.6\textwidth]{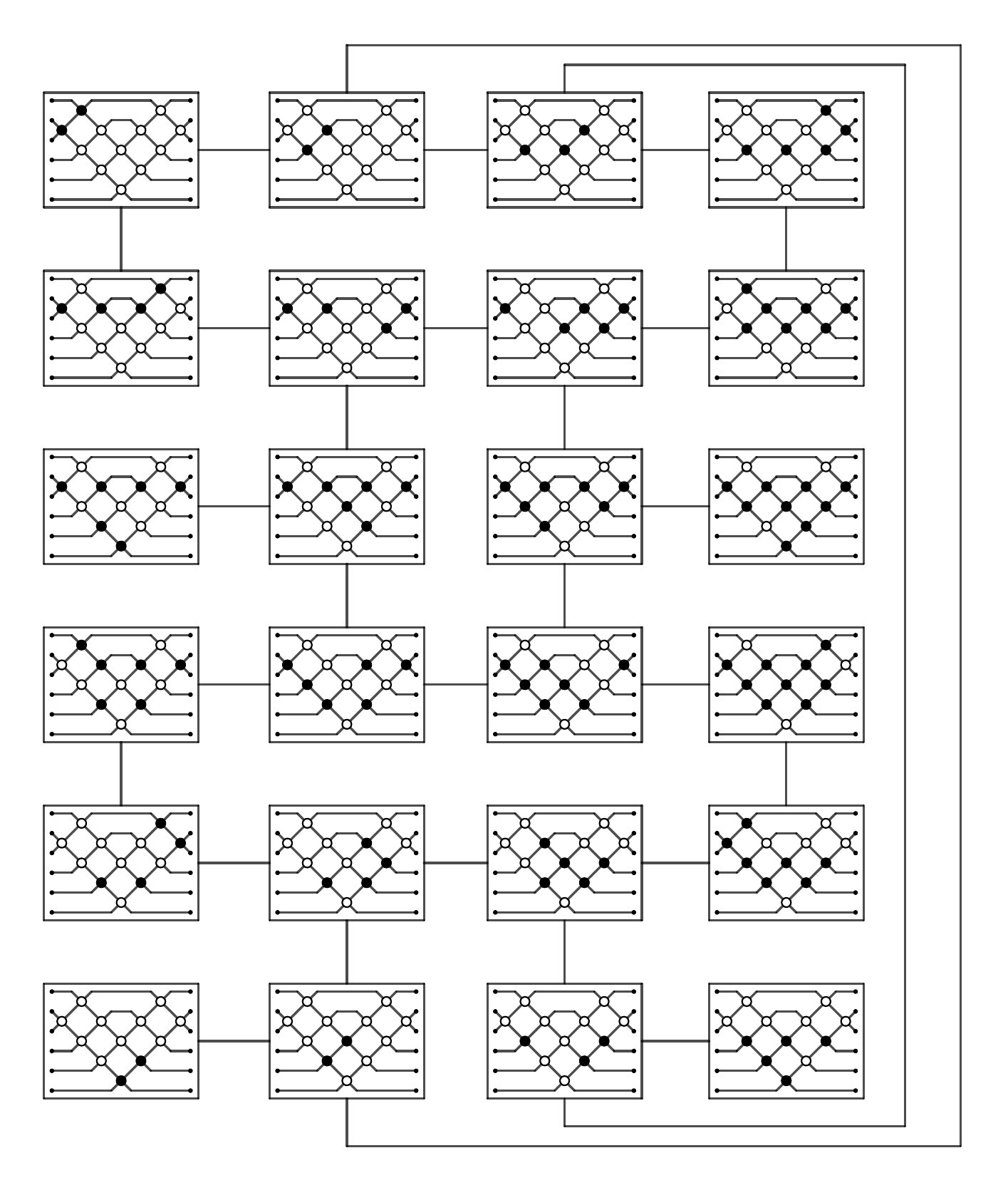}}
    \caption{Connected component homotopically equivalent to $\mathbb{S}^1.$}
    \label{inv12sigma12-2}
\end{figure}

\begin{figure}[H]
   \centerline{    
   \includegraphics[width=0.25\textwidth]{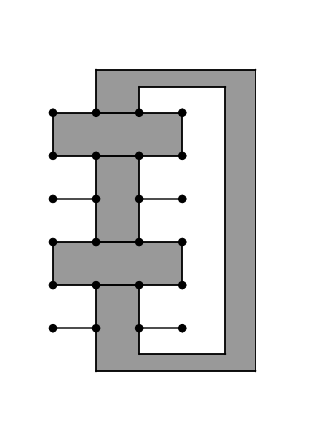}}
    \caption{CW complex homotopically equivalent to $\mathbb{S}^1.$}
    \label{inv12sigma12-2cw}
\end{figure}

\subsection{The New non-Contractible Component}\label{dim3}

Another connected component homotopically equivalent to $\mathbb{S}^1$, and therefore non-contractible, was found with a CW complex of dimension $3$. This component consists of four $3$-cells attached together to form a solid torus. In addition, several $2$-cells are attached as wings, without changing its homotopy type. The component corresponds to the orbits $\mathcal{O}_{\hat{a}_3\acute{\sigma}}$ and $\mathcal{O}_{\hat{a}_1\hat{a}_3\acute{\sigma}}$

Let us go through the step by step construction of this component, 
adding the 3-cells one by one until we attach the last one with the first to generate the solid torus. 
Note that in some cells, we have vertices connected to only one edge. In some of these cases, 
we connect them with an edge in another solid, thus generating the mentioned wings.

    \textbf{Step 1:} First, we have a 3-cell that fills the cube in Figure \ref{inv12sigma12-3.1.2}.

\begin{figure}[H]
   \centerline{    
   \includegraphics[width=0.55\textwidth]{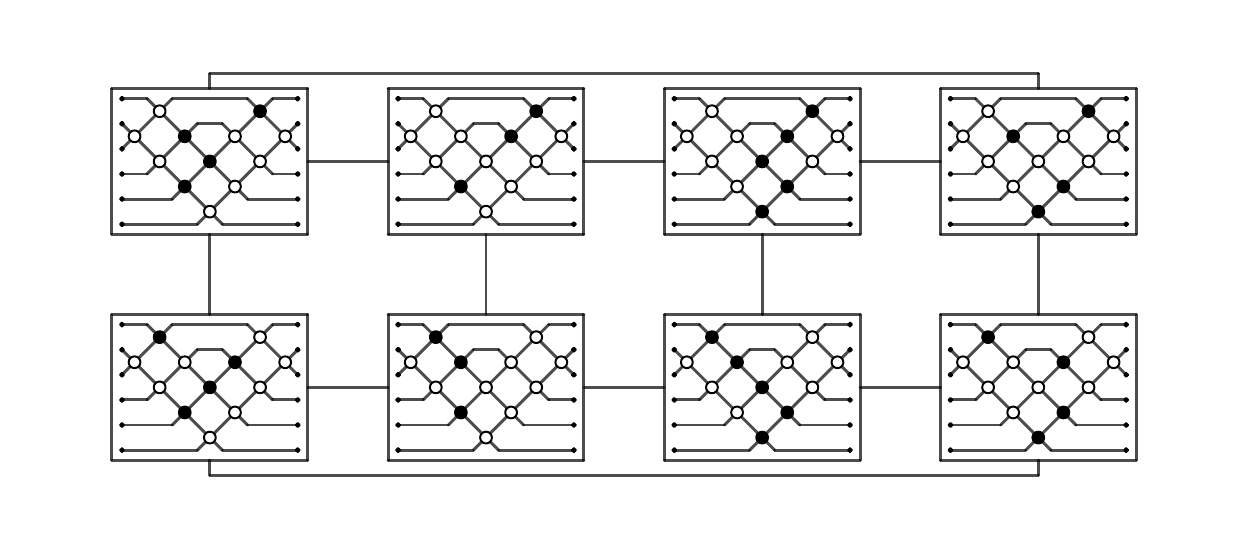}}
    \caption{Cube with ancestry $\varepsilon_1=(\circ\sbd\circ\sbd\sbd\circ\sd\sd\circ\circ\circ\,\circ)$.}
    \label{inv12sigma12-3.1.2}
\end{figure}

Some lower-dimensional cells are attached to the cube, resulting in the structure shown in Figure \ref{inv12sigma12-3.1}.

\begin{figure}[H]
   \centerline{    
   \includegraphics[width=0.55\textwidth]{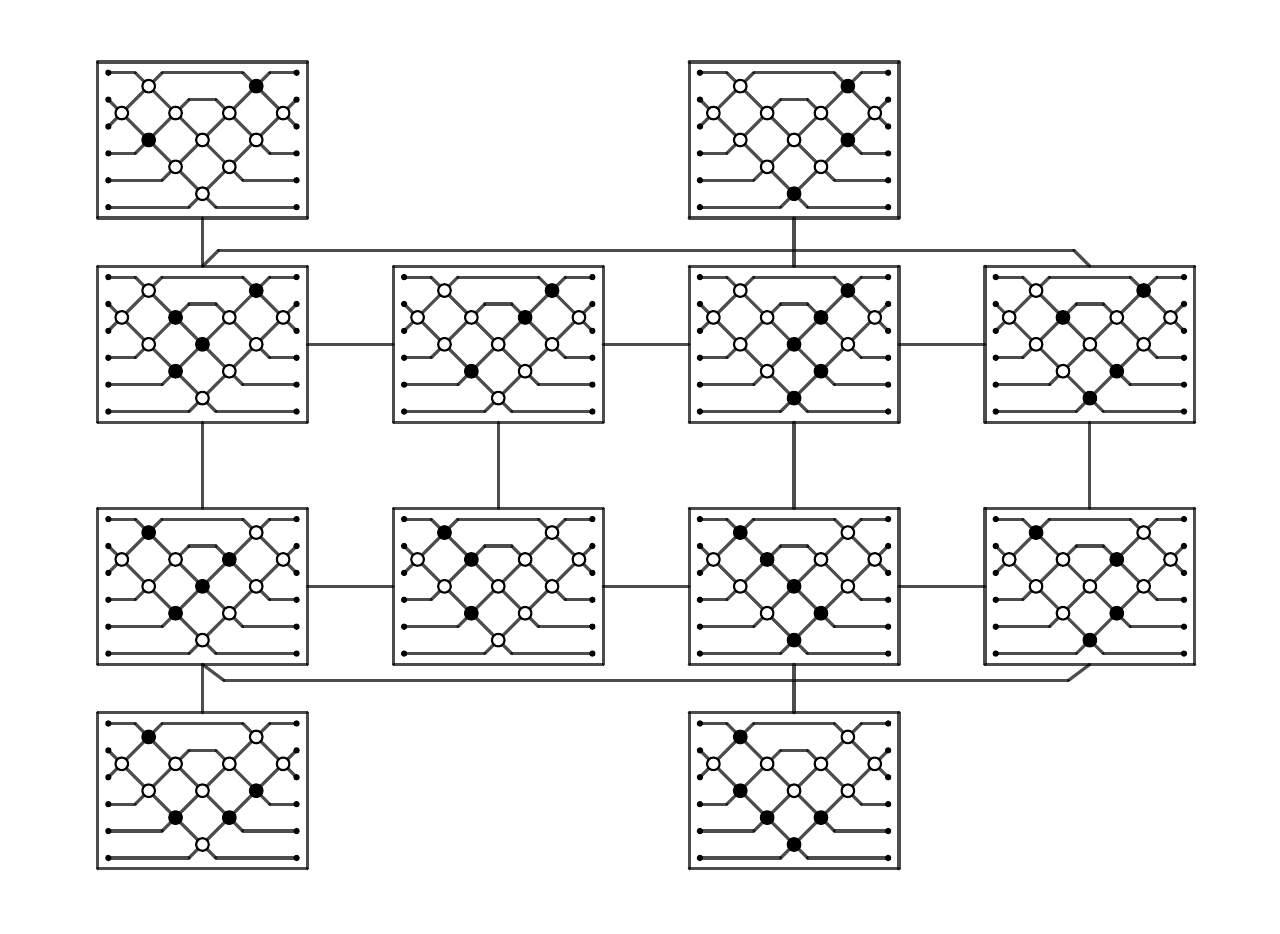}}
    \caption{First part of the CW complex.}
    \label{inv12sigma12-3.1}
\end{figure}

\textbf{Step 2:} Attach a 3-cell that fills the convex solid with 18 faces in Figure \ref{inv12sigma12-3.3.2}. Attachment occurs through the square face with ancestry $\varepsilon_2=~(\circ\sbc\circ\sbd\sbd\circ\sd\circ\circ\sd\circ\,\circ)$.

       \begin{figure}[H]
   \centerline{    
   \includegraphics[width=1.0\textwidth]{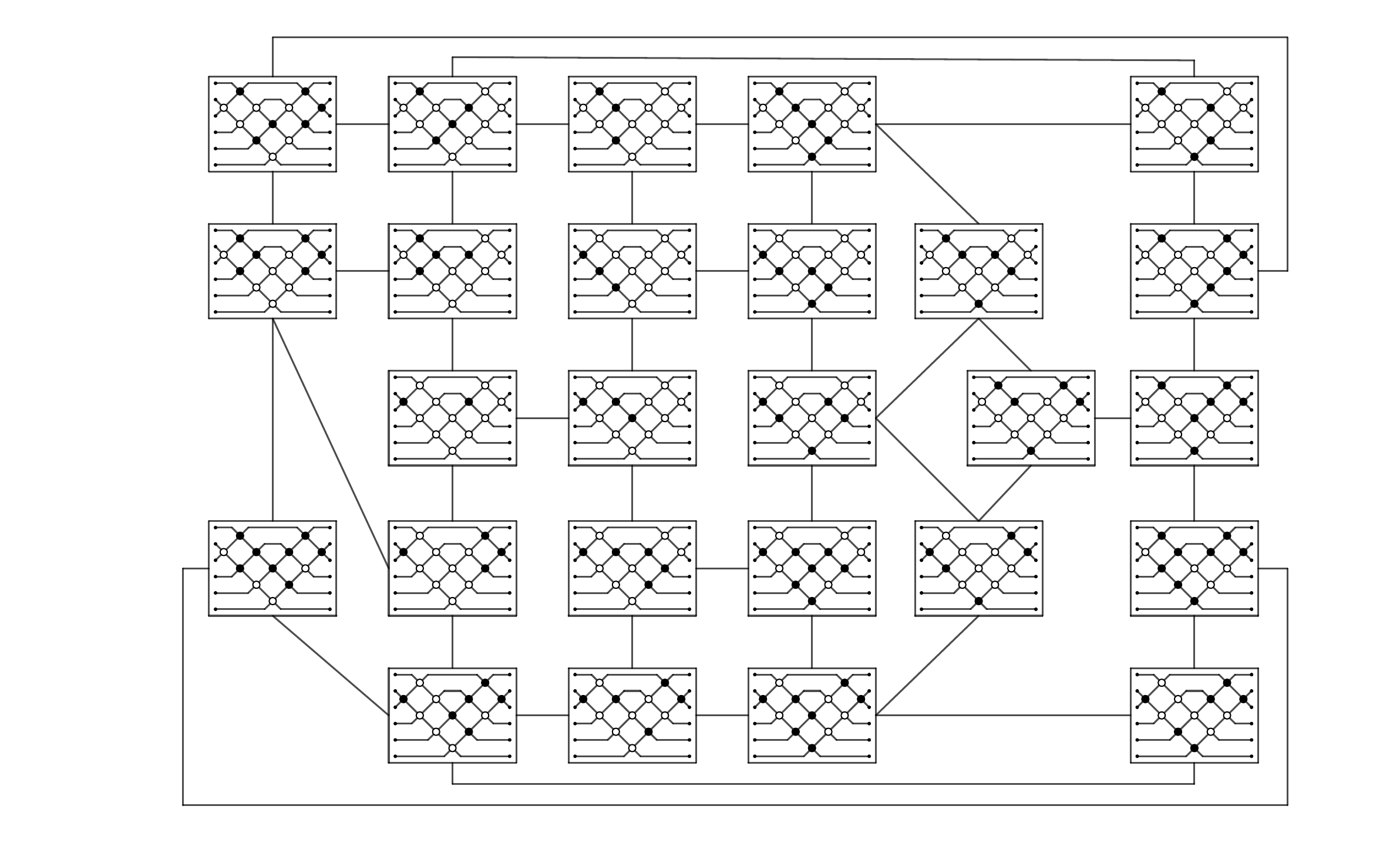}}
    \caption{Convex solid with ancestry $\varepsilon_3=(\sbd\circ\sbd\circ\sbd\circ\circ\circ\circ\sd\sd\,\sd).$}
    \label{inv12sigma12-3.3.2}
\end{figure}

Some lower-dimensional cells are attached to the convex solid, resulting in the structure shown in Figure \ref{inv12sigma12-3.3}.

       \begin{figure}[H]
   \centerline{    
   \includegraphics[width=1.0\textwidth]{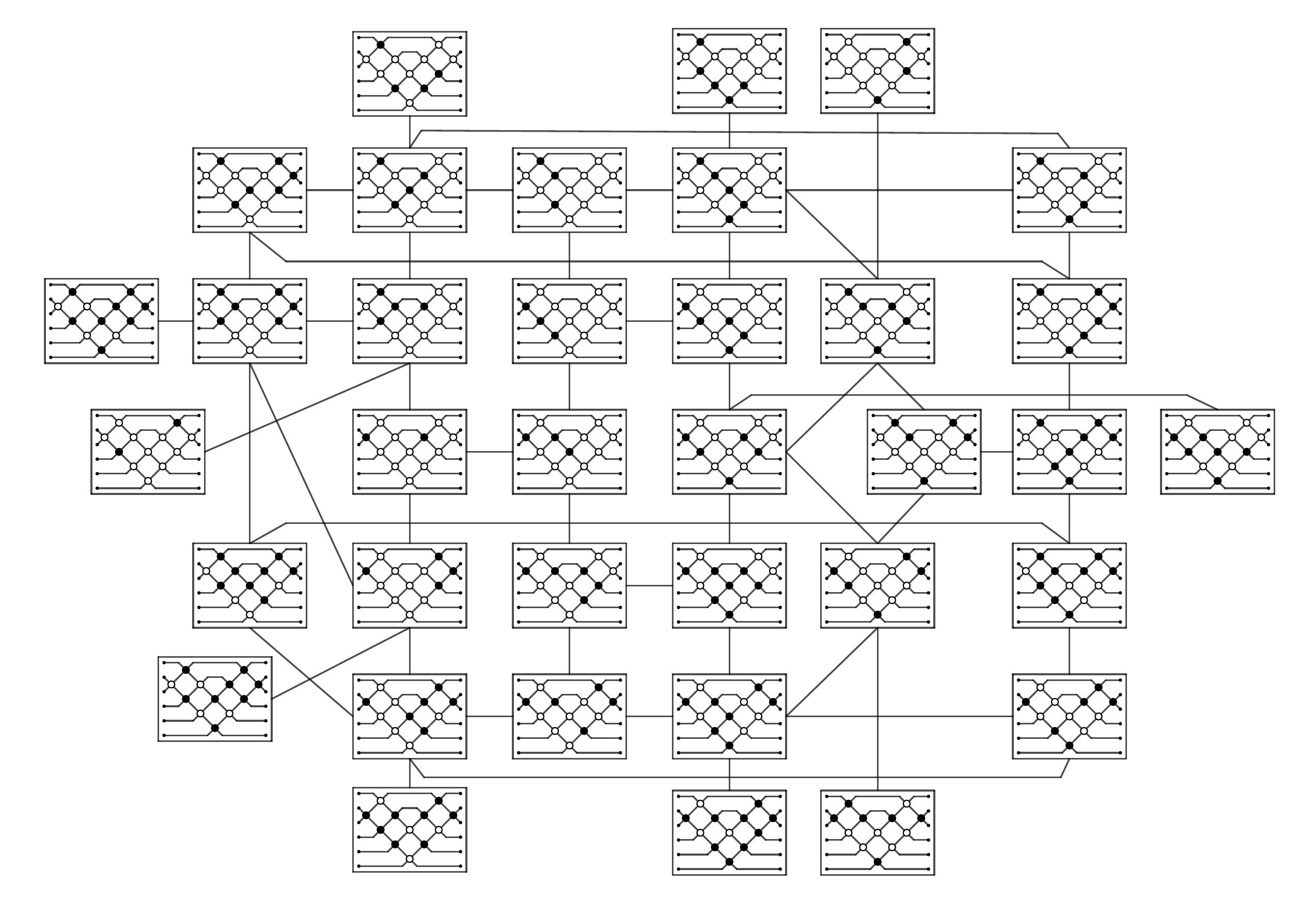}}
    \caption{Second part of the CW complex.}
    \label{inv12sigma12-3.3}
\end{figure}

Following this attachment, two 2-cells appear as wings in the component when we attach the previous two.

       \begin{figure}[H]
   \centerline{    
   \includegraphics[width=0.65\textwidth]{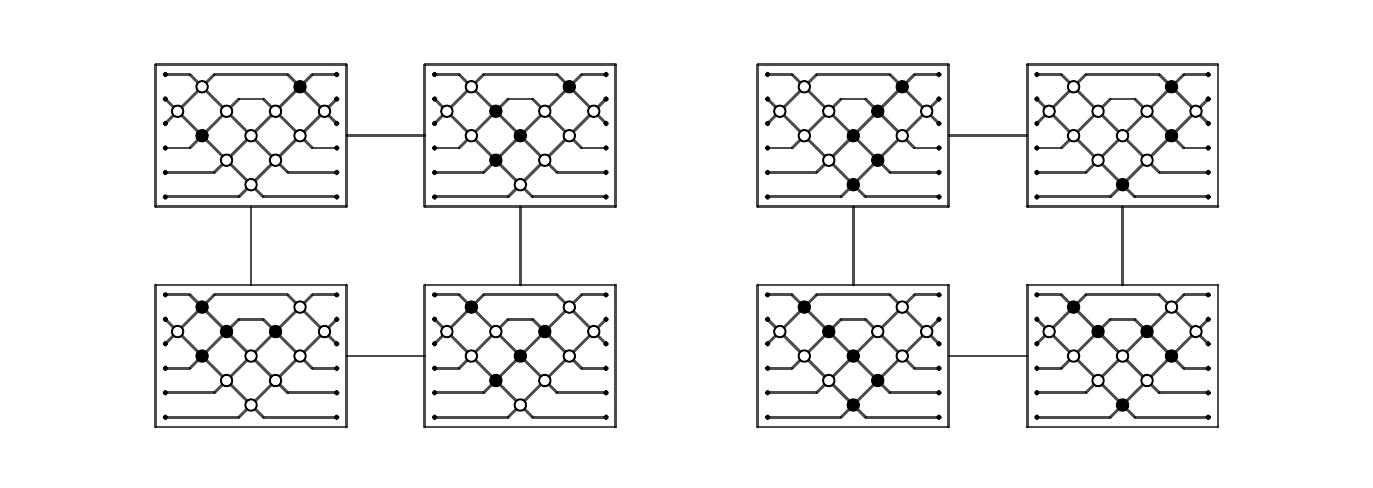}}
    \caption{Cells with dimension 2 with ancestries $\varepsilon_4=(\circ\sbd\sbd\sbc\circ\sd\sbc\sd\circ\circ\circ\,\circ)$ and $\varepsilon_5=~(\circ\sbd\circ\sbc\circ\sbd\circ\sd\sbc\sbc\sd\,\circ)$.}
    \label{inv12sigma12-3.5}
\end{figure}

\textbf{Step 3:} Attach another 3-cell that fills a cube, as shown in Figure \ref{inv12sigma12-3.2}.
Attachment occurs through the square face with ancestry 
$\varepsilon_6=~(\sbc\circ\circ\sbd\sbd\sbc\sd\sbc\sbc\sd\circ\,\sbc)$.

       \begin{figure}[H]
   \centerline{    
   \includegraphics[width=0.55\textwidth]{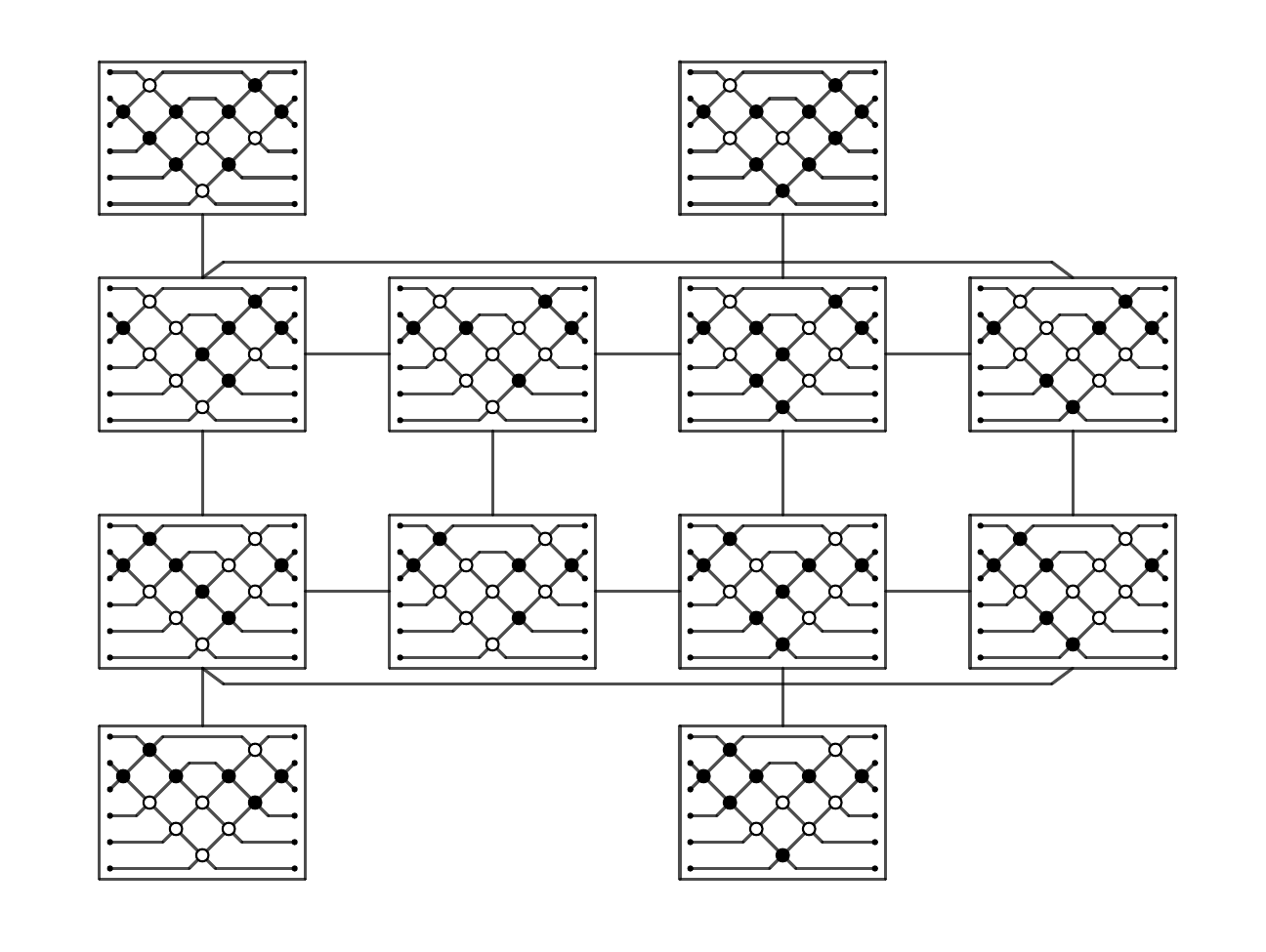}}
    \caption{Third part of the CW complex with ancestry of dimension 3: $\varepsilon_7=~(\sbc\sbd\circ\sbd\sbd\circ\sd\sd\sbc\sd\circ\,\sbc).$}
    \label{inv12sigma12-3.2}
\end{figure}

Following this attachment, similar to the previous case, some 2-cells appear as wings in the component.

       \begin{figure}[H]
   \centerline{    
   \includegraphics[width=0.65\textwidth]{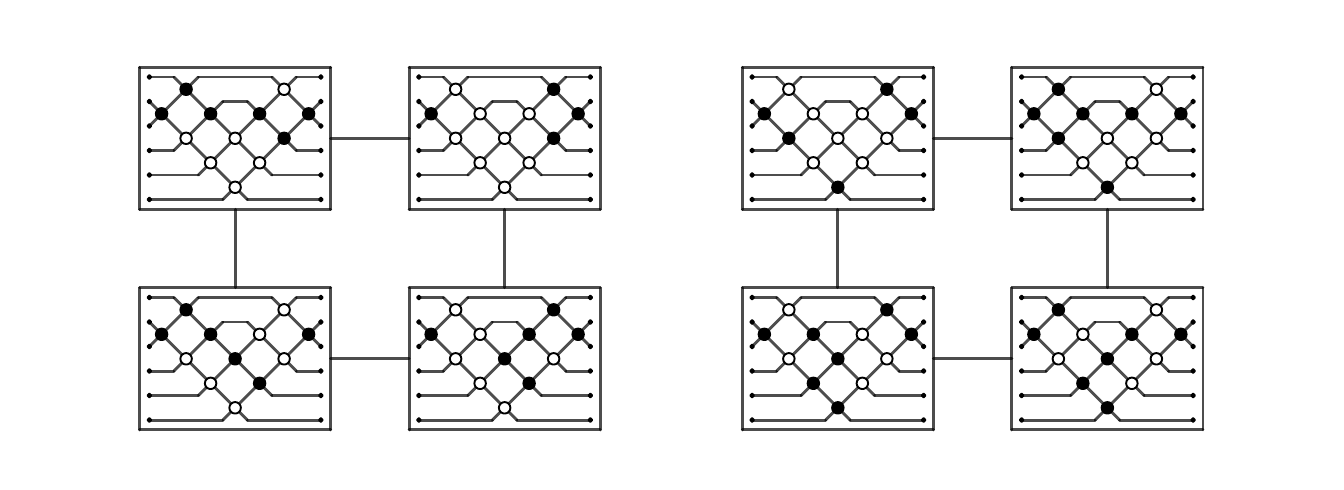}}
    \caption{Cells with dimension 2 with ancestries $\varepsilon_8=(\sbc\sbd\circ\sbc\circ\sbd\circ\sd\circ\sbc\sd\,\sbc)$ and $\varepsilon_9=~(\sbc\sbd\sbd\sbc\circ\sd\sbc\sd\sbc\circ\circ\,\sbc).$}
    \label{inv12sigma12-3.6}
\end{figure}

\textbf{Step 4:} Attach a 3-cell that fills another convex solid with 18 faces, similar to the previous one.
Attachment occurs through the square face with ancestry 
$\varepsilon_{10}=~(\sbc\sbc\circ\sbd\sbd\circ\sd\circ\sbc\sd\circ\,\sbc)$.

       \begin{figure}[H]
   \centerline{    
   \includegraphics[width=1.1\textwidth]{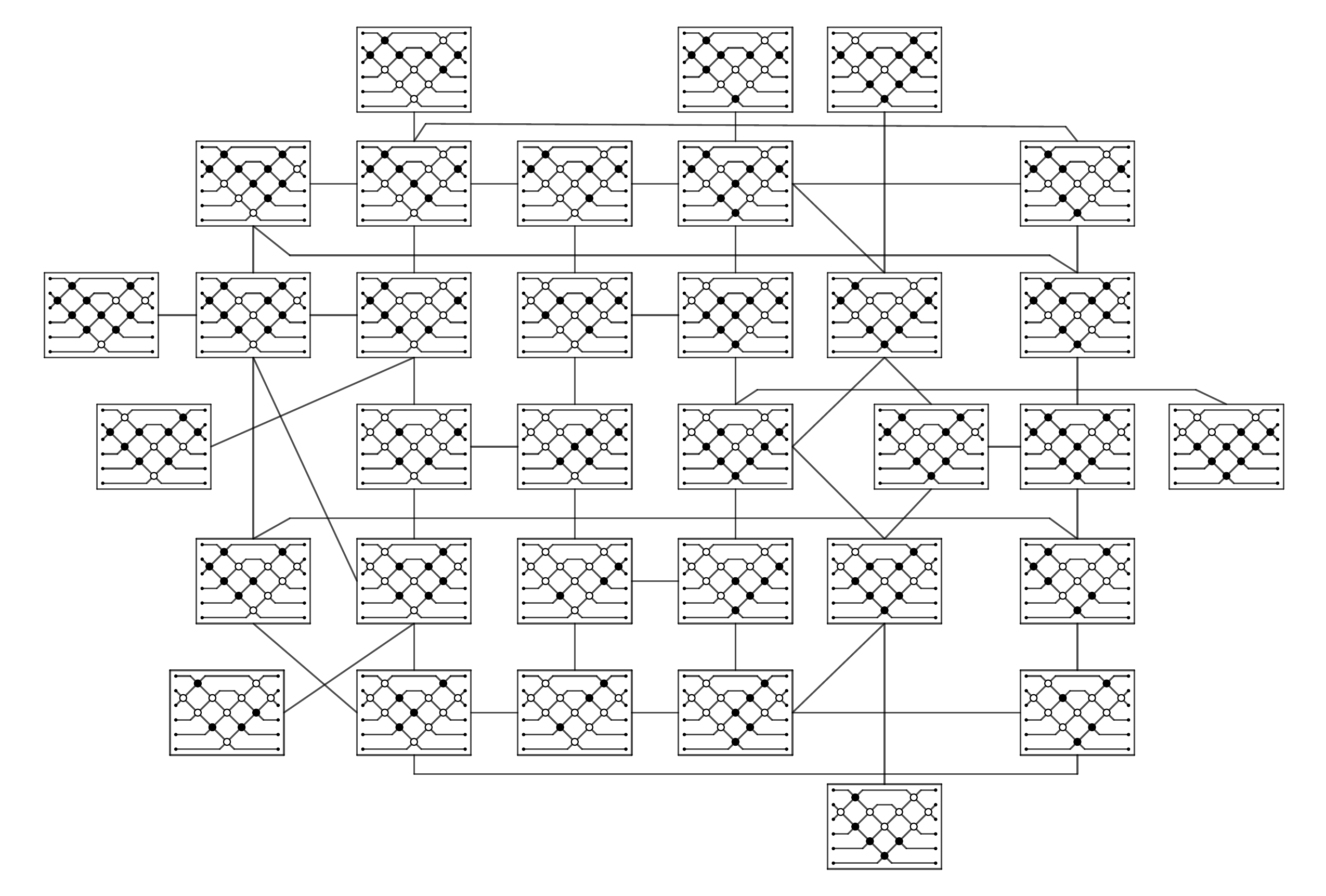}}
    \caption{Fourth part of the CW complex with ancestry of dimension 3: $\varepsilon_{11}=~(\sbd\sbc\sbd\circ\sbd\sbc\circ\sbc\circ\sd\sd\,\sd).$}
    \label{inv12sigma12-3.4}
\end{figure}

Following this attachment, we have some 2-cells that also appear as wings in the component.

       \begin{figure}[H]
   \centerline{    
   \includegraphics[width=0.65\textwidth]{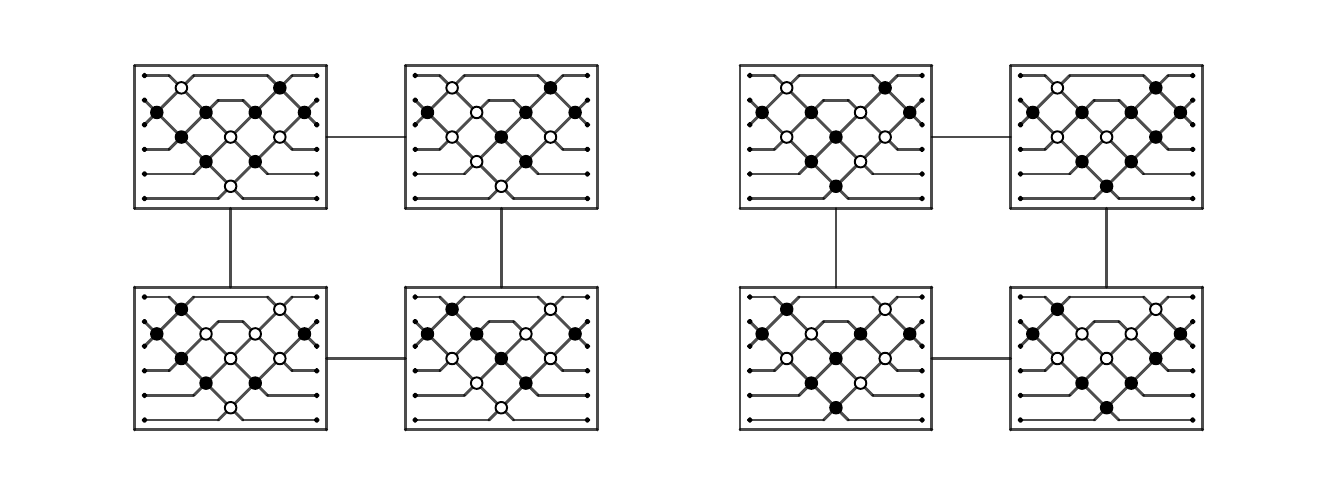}}
    \caption{2-Cells with ancestries $\varepsilon_{12}=(\sbc\sbd\sbd\circ\sbc\sd\circ\sd\circ\sbc\circ\,\sbc)$
    and \\$\varepsilon_{13}=~(\circ\sbc\sbd\circ\circ\sbc\sbd\sbc\sd\sbc\circ\sd\,\sbc)$.}
    \label{inv12sigma12-3.7}
\end{figure}

\textbf{Step 5:} To complete the attachment, the one in Figure \ref{inv12sigma12-3.7} is attached to the cell in Figure \ref{inv12sigma12-3.1}, 
resulting in the formation of the solid torus.
The attachment is realized on the square face with ancestry
$\varepsilon_{14}=(\circ\circ\circ\sbd\sbd\sbc\sd\sbc\circ\sd\circ\,\circ)$.

After this last attachment, we have some 2-cells that appear as wings in the component.

       \begin{figure}[H]
   \centerline{    
   \includegraphics[width=0.65\textwidth]{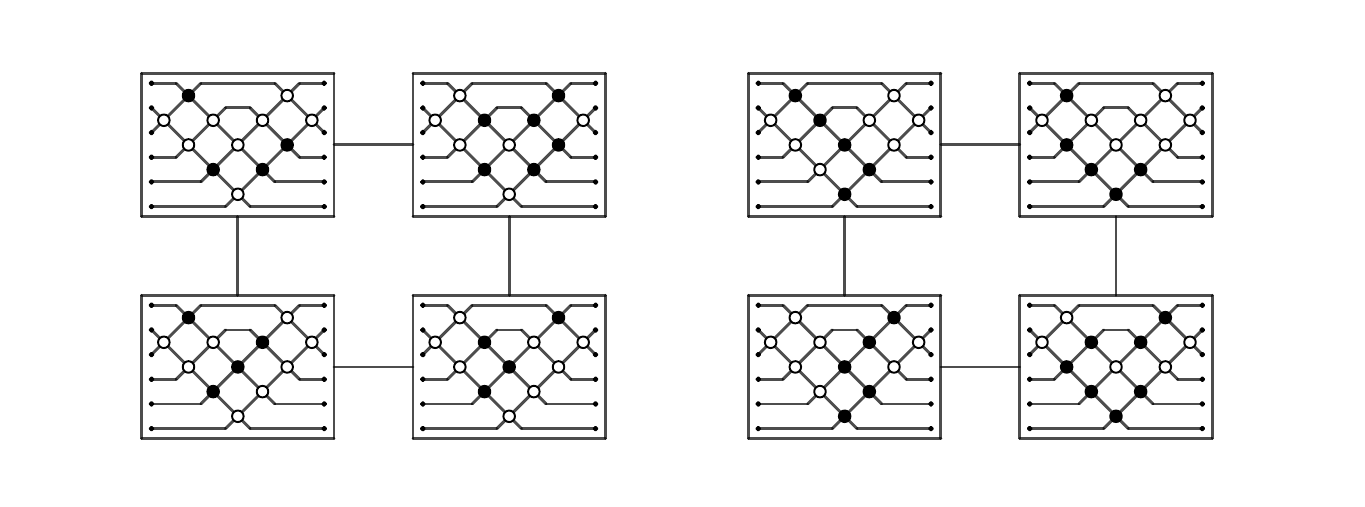}}
    \caption{Cells with dimension 2 with ancestries $\varepsilon_{15}=(\circ\sbd\circ\circ\sbc\sbd\sbc\sd\circ\circ\sd\,\circ)$ and $\varepsilon_{16}=~(\circ\sbd\sbd\circ\sbc\sd\circ\sd\sbc\sbc\circ\,\circ)$.}
    \label{inv12sigma12-3.8}
\end{figure}

Upon completing all the attachments, we have a component that is homotopically equivalent to $\mathbb{S}^1$.
Therefore, $\Bl_{\sigma}$ has 16 components of this type.

For easier visualization, Figure \ref{noncw} first displays the CW complex without the 1-cells and 2-cells attached. It then shows the same CW complex with these cells added, with the red cells representing those not shown in the initial diagram. In this representation, cells of dimension greater than 1 are not filled for clarity.

       \begin{figure}[H]
   \centerline{    
   \includegraphics[width=0.9\textwidth]{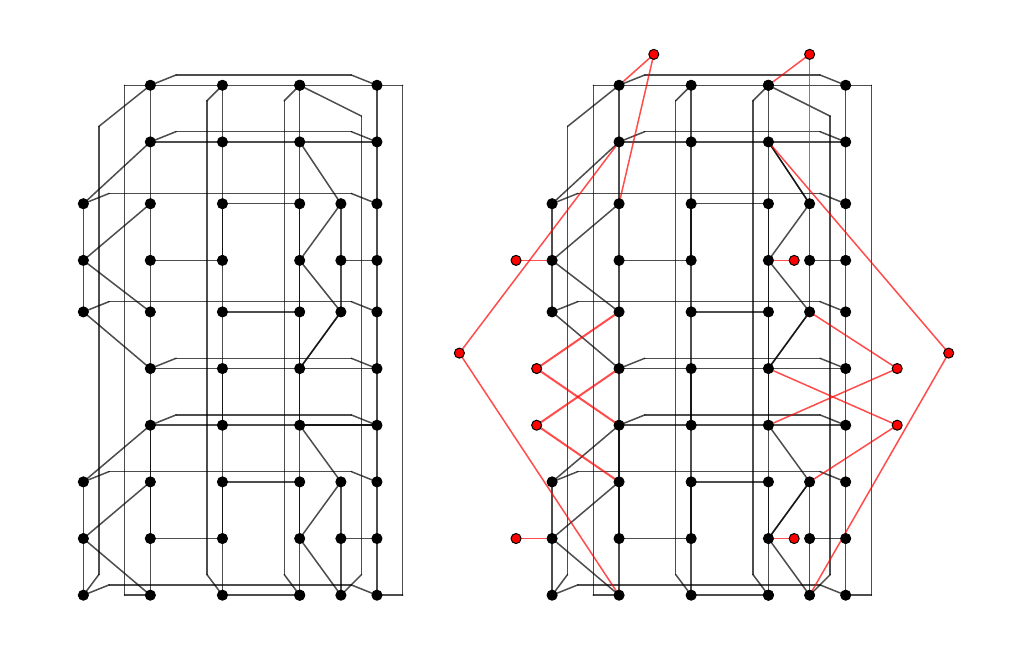}}
    \caption{Non-contractible CW complex.}
    \label{noncw}
\end{figure}

Furthermore, we generate the paths, using Equation \ref{bls0}, that correspond to the edges yielding $\mathbb{S}^1$, by concatenating these paths we obtain the circle. The paths are given by:
\[
\Gamma_i:[-1,1]\to \Lo_6^1,\quad i\in\{1,\ldots,10\},
\]

\[
\footnotesize
\Gamma_1(t)=
\begin{pmatrix}
                1 &  &  &  &  & \\
                0 & 1 &  &  &  &  \\
                0 & 2 & 1 &  &  &  \\
                1 & t+1 & 1 & 1 &  &  \\
                -1 & 1-t & 1 & 0 & 1 &  \\
                0 & 1 & 1 & 1 & 1 & 1 \\
\end{pmatrix}
,\quad
\Gamma_2(t)=
\begin{pmatrix}
                1 &  &  &  &  &  \\
                0 & 1 &  &  &  &  \\
                -1-t & 1-t & 1 &  &  &  \\
                1 & 2 & 1 & 1 &  &  \\
                -1 & t+1 & 2+t & t+1 & 1 &  \\
                0 & 1 & 1 & 1 & 1 & 1 \\
\end{pmatrix},
\]
\[
\footnotesize
\Gamma_3(t)=
\begin{pmatrix}
                1 &  &  &  &  &  \\
                t+1 & 1 &  &  &  &  \\
                -2 & 0 & 1 &  &  &  \\
                -t & 2 & 2+t & 1 &  &  \\
                -1 & 2 & 3 & 2 & 1 &  \\
                0 & 1 & 1 & 1 & 1 & 1 \\
\end{pmatrix}
,\quad
\Gamma_4(t)=
\begin{pmatrix}
                1 &  &  &  &  &  \\
                t-1 & 1 &  &  &  &  \\
                -2 & 0 & 1 &  &  &  \\
                -t-2 & 2 & 2-t & 1 &  &  \\
                -1 & 2 & 1-2t & 2 & 1 &  \\
                0 & 1 & -t & 1 & 1 & 1 \\
\end{pmatrix},
\]
\[
\footnotesize
\Gamma_5(t)=
\begin{pmatrix}
                1 &  &  &  &  &  \\
                0 & 1 &  &  &  &  \\
                -1+t & -1-t & 1 &  &  &  \\
                t-2 & 1-t & 1 & 1 &  &  \\
                -1 & 2 & -1 & 1-t & 1 &  \\
                0 & 1 & -1 & -t & 1 & 1 \\
\end{pmatrix}
,\quad
\Gamma_6(t)=
\begin{pmatrix}
                1 &  &  &  &  &  \\
                0 & 1 &  &  &  &  \\
                0 & -2 & 1 &  &  &  \\
                -1 & -1-t & 1 & 1 &  &  \\
                -1 & 1-t & -1 & 0 & 1 &  \\
                0 & 1 & -1 & -1 & 1 & 1 \\
\end{pmatrix},
\]
\[
\footnotesize
\Gamma_7(t)=
\begin{pmatrix}
                1 &  &  &  &  &  \\
                0 & 1 &  &  &  &  \\
                t+1 & -1+t & 1 &  &  &  \\
                -1 & -2 & 1 & 1 &  &  \\
                -1 & t+1 & -2-t & -1-t & 1 &  \\
                0 & 1 & -1 & -1 & 1 & 1 \\
\end{pmatrix}
,\quad
\Gamma_8(t)=
\begin{pmatrix}
                1 &  &  & &  &  \\
                1+t & 1 &  &  &  &  \\
                2 & 0 & 1 & & & \\
                t & -2 & 2+t & 1 & & \\
                -1 & 2 & -3 & -2 & 1 & \\
                0 & 1 & -1 & -1 & 1 & 1 \\
\end{pmatrix},
\]
\[
\footnotesize
\Gamma_9(t)=
\begin{pmatrix}
                1 & & & & & \\
                1-t & 1 & & & & \\
                2 & 0 & 1 & & & \\
                2+t & -2 & 2-t & 1 & & \\
                -1 & 2 & -1+2t & -2 & 1 & \\
                0 & 1 & t & -1 & 1 & 1 \\
\end{pmatrix}
,\quad
\Gamma_{10}(t)=
\begin{pmatrix}
                1 & & & & & \\
                0 & 1 & & & & \\
                1-t & t+1 & 1 & & & \\
                2-t & -1+t & 1 & 1 & & \\
                -1 & 2 & 1 & -1+t & 1 & \\
                0 & 1 & 1 & t & 1 & 1 \\
\end{pmatrix}.
\]

Note that applying $t=-1$ in $\Gamma_1$ and $t=1$ in $\Gamma_{10}$ results in the same matrix, indicating that the concatenation of these paths forms a closed curve homotopically equivalent to $\mathbb{S}^1$.

For the remaining six components, five are contractible and one has Euler characteristic equal to 1.
See \cite{leal2025homotopy} for the construction of these components.

After completing the analysis of all the components of $\Bl_{\sigma}$ for each $\sigma\in\Sn_6$ with $\inv(\sigma)\leq12$, we arrive at the main result Theorem \ref{theo1}.

\section{Final remarks and open problems}\label{131415}

For permutations $\sigma\in\Sn_6$ with $\inv(\sigma)\geq13$, the difficulty increases significantly. Although we were unable to determine the homotopy type of the components, we do have information about the orbits and the Euler characteristics of their components. 

The maximum dimension of the preancestries for $\sigma$ with $\inv(\sigma)=13$ or $14$ is 5, and for $\inv(\sigma)=15$ it reaches 6. This significantly complicates the visualization of the components and, more importantly, makes interpreting these constructions increasingly challenging and uncertain.

\subsection{Permutations in $\Sn_6$ with $\inv(\sigma)=13$}

There are 14 permutations $\sigma\in\Sn_{6}$ with $\inv(\sigma)=13$. Let us present some important information regarding these permutations.

\begin{enumerate}
    \item There are 6 permutations with 2 orbits, each containing 32 elements. For these permutations, the Euler characteristic of the components is 1, indicating that they are possibly contractible.

    \item There are 8 permutations with five orbits each: three of them containing 16 elements and two with 8 elements.
    
    \begin{enumerate}
        \item Four permutations have four orbits whose components have Euler characteristics equal to 1, compatible with a trivial homotopy type, while one orbit has a component with an Euler characteristic equal to 0, which means a nontrivial homotopy type. Although we have constructed the CW complexes for these components, which suggest they are homotopically equivalent to $\mathbb{S}^1$, the complexity of these constructions makes them difficult to present in full detail in the present paper.

        \item Four permutations exhibit three orbits with components that have Euler characteristics equal to 1, indicating potential contractibility. Two orbits, however, show components with Euler characteristics equal to 2. For these, we have constructed the CW complexes and found that one orbit contains two copies of a contractible CW complex consisting of 56 0-cells, 96 1-cells, 46 2-cells, and 5 3-cells. In the other orbit, there are two distinct components. One of them has 16 0-cells, 16 1-cells, and 1 2-cell, which is contractible. The other one has 128 0-cells, 240 1-cells, 175 2-cells, 52 3-cells, and 6 4-cells. Therefore, this component has Euler characteristic equal to 1, compatible with a trivial homotopy type.
    \end{enumerate}
\end{enumerate}

\subsection{Permutations in $\Sn_6$ with $\inv(\sigma)=14$}

There are 5 permutations $\sigma\in\Sn_{6}$ with $\inv(\sigma)=14$. 

\begin{enumerate}
    \item Four of these permutations have three orbits: two with 16 elements and one with 32 elements. The Euler characteristic of the components for these permutations is 1, consistent with contractibility.

    \item There is one permutation with nine orbits: six of them have components with Euler characteristic equal to 1, indicating a potentially trivial homotopy type. Two orbits have components with Euler characteristic equal to 0, and based on the distribution of the ancestries, we conjecture that these components are homotopically equivalent to $\mathbb{S}^1$. Specifically, these components consist of 256 0-cells, 576 1-cells, 416 2-cells, 100 3-cells, and 4 4-cells. Unfortunately, representing these components graphically exceeds the capabilities of our current tools. The remaining orbit has Euler characteristic equal to 2, for these components, we have constructed the CW complexes and observed that they disconnect, resulting in two copies of a contractible CW complex. The component has 112 0-cells, 216 1-cells 128 2-cells, 24 3-cells and 1 4-cell. The complexity of these constructions makes them difficult to present in full detail in this work.

\end{enumerate}

\subsection{Permutations in $\Sn_6$ with $\inv(\sigma)=15$}

There is only one permutation $\eta\in\Sn_{6}$ with $\inv(\sigma)=15$. Its five orbits are separated into two with 8 elements and three with 16 elements. Furthermore, by Proposition 11.3 in \cite{alves2022onthehomotopy}, $\Bl_{z,\text{thick}},\, z\in\acute{\eta}\Qt_{n+1}$ is nonempty and connected.

From Chapter 15 in \cite{alves2022onthehomotopy}, we already know that there exists a noncontractible component of $\Bl_{\eta}$, with 480 0-cells, 1120 1-cells, 864 2-cells, 228 3-cells and 6 4-cells. Therefore, the component has Euler characteristic equal to 2. The homotopy type of this component remains unknown; additional techniques will be necessary to solve this problem.

The remaining four orbits have components with Euler characteristic equal to 1, consistent with contractibility. One of these orbits, the one with a positive real part, includes one 6-dimensional cell.

\subsection{Permutations in $\Sn_{n+1}$, $n\geq6$}

The following conjectures remain open and will be the focus of future research. We note that Conjecture 1 is strongly motivated by, and follows naturally from, the results presented in this work.

\begin{Conje}
For every $n \geq 6$, there are at least $n - 4$ permutations $\sigma \in \Sn_{n+1}$ with $\inv(\sigma) = 12$ such that $\Bl_{\sigma}$ contains connected components homotopically equivalent to $\mathbb{S}^1$. Furthermore, for each $\sigma \in \Sn_{n+1}$ with $\inv(\sigma) = 12 + k$, where $k \geq 1$, there exist $4^{k}$ connected components that are homotopically equivalent to $\mathbb{S}^{1}$.
\end{Conje}

\begin{Conje}
        Let $\sigma\in\Sn_{n+1}$. If $\inv(\sigma)\leq11$, then every connected component of every set $\Bl_{\sigma}$ is contractible.
\end{Conje}

\printbibliography

\end{document}